
\documentclass{amsart}
\usepackage{verbatim}
\usepackage{rotating} 
\usepackage{amssymb}


\hyphenation{arch-i-med-e-an}


\newtheorem{theorem}{Theorem}[section]
\newtheorem{proposition}[theorem]{Proposition}

\newtheorem{lemma}[theorem]{Lemma}

\newtheorem{cor}[theorem]{Corollary}

\newtheorem{conjecture}[theorem]{Conjecture}

\theoremstyle{plain}
\numberwithin{equation}{theorem}

\theoremstyle{remark}

\newtheorem{remark}[theorem]{Remark}

\newcommand{\Q}{{\mathbb Q}}

\newcommand{\cK}{{\mathcal K}}

\newcommand{\fo}{\mathfrak o}
\newcommand{\fq}{\mathfrak q}

\newcommand{\fp}{\mathfrak p}

\newcommand{\Kbar}{\overline K}
\newcommand{\kbar}{\overline{k}}

\DeclareMathOperator{\Orb}{Orb}

\DeclareMathOperator{\rad}{rad}

\newcommand{\Qbar}{\bar{\Q}}

\DeclareMathOperator{\Gal}{Gal}
\DeclareMathOperator{\Disc}{Disc}
\DeclareMathOperator{\Aut}{Aut}
\DeclareMathOperator{\N}{N}

\newcommand{\bP}{{\mathbb P}}
\newcommand{\bZ}{{\mathbb Z}}

\newcommand{\bC}{{\mathbb C}}

 \newcommand{\bQ}{{\mathbb Q}}

\newcommand{\lra}{\longrightarrow}

\begin{document}

\title{ABC implies primitive prime divisors in arithmetic dynamics}
\author{C.~Gratton, K.~Nguyen, and T.~J.~Tucker}
\keywords{primitive divisors, abc conjecture}

\address{
Chad Gratton\\
Department of Mathematics\\
Hylan Building\\
University of Rochester\\
Rochester, NY 14627
}

\email{grattonchad@gmail.com}

\address{
Khoa Nguyen \\
Department of Mathematics\\
University of California\\
Berkeley, CA 94720 
}

\email{khoanguyen2511@gmail.com}

\address{
Thomas Tucker\\
Department of Mathematics\\
Hylan Building\\
University of Rochester\\
Rochester, NY 14627
}

\email{thomas.tucker@rochester.edu}

\begin{abstract}
Let $K$ be a number field, let $\varphi(x) \in K(x)$
be a rational function of degree $d> 1$, and let $\alpha \in K$ be a
wandering point such that $\varphi^n(\alpha) \not= 0$ for all $n > 0$.  We
prove that if the $abc$ conjecture holds for
$K$, then for all but finitely many positive integers $n$, there is a prime $\fp$ of $K$ such
that $v_\fp( \varphi^n(\alpha)) > 0$ and $v_\fp(\varphi^m(\alpha)) \leq 0$ for all positive
integers $m < n$.  We prove the same result unconditionally for
function fields of characteristic 0 when $\varphi$ is not isotrivial.  
\end{abstract}

\thanks{The third author was partially supported by NSF Grants
  DMS-0854839 and DMS-1200749}

\maketitle

\section{Introduction}

Let $K$ be a number field or function field, let $\varphi(x) \in K(x)$
be a rational function of degree $d> 1$, and let $\alpha \in K$.  We
denote the $n$-iterate of $\varphi$ as $\varphi^n$.  It is often the
case that for all but finitely many $n$, there is a prime that is a
divisor of $\varphi^n(\alpha)$ that is not a divisor of
$\varphi^m(\alpha)$ for any $m < n$.  This problem was first
considered by Bang \cite{Bang}, Zsigmondy \cite{Z}, and Schinzel
\cite{Schinzel} in the context of the multiplicative group.  More
recently, many authors have considered the problem in other cases.
Most of these results apply either when 0 is preperiodic under
$\varphi$ (see \cite{FG, IS}, for example) or when 0 is a ramification
point of $\varphi$ (see \cite{DH, Krieger, Rice}).  In this paper, we
show that similar results will hold in close to full generality, assuming the
$abc$-conjecture of Masser-Oesterl\'e-Szpiro for number fields.  Our
result also holds unconditionally over characteristic 0 function fields, where the
$abc$ conjecture is a theorem of Mason \cite{Mason}.  We are not
however, able to derive it directly from the $abc$ conjecture in this
case, because of the absence of Belyi maps (see Lemma~\ref{1}) over
function fields; our proof requires a more difficult theorem of
Yamanoi \cite{Yam} conjectured by Vojta in \cite{Vojta-ABC}.

We will say that a field $K$ is an {\bf $abc$-field} if $K$ is a
number field satisfying the $abc$-conjecture \cite{Vojta-Diophantine-SV} or a characteristic zero function field of transcendence
degree 1.  We define the {\bf orbit $\Orb_\varphi(\alpha)$} of a
point $\alpha$ under a map $\varphi$ to be $\Orb_\varphi(\alpha) =
\bigcup_{i=1}^\infty \{ \varphi^i(\alpha) \}$.  The most general
results here are most naturally stated in terms of the canonical
height $h_\varphi$ of Call and Silverman \cite{CS} (see
\eqref{candef}, \eqref{can1}, \eqref{can2}, and \eqref{can3} for its
definition and a few of its basic properties).

With this notation and terminology, the main theorem of our paper is the
following.   

\begin{theorem}\label{main1}
  Let $K$ be an $abc$ field, let $\varphi \in K(x)$ have degree $d
  > 1$, and let $\alpha \in K$ be a point such that
  $h_\varphi(\alpha) > 0$ and $0 \notin \Orb_\varphi(\alpha)$.
  Suppose that $\varphi(z) \not= c z^{\pm d}$ for any
  $c \in K$.  Then for all but finitely many positive
  integers $n$, there
  is a prime $\fp$ of $K$ such that $v_\fp(\varphi^n(\alpha)) > 0$ and
  $v_\fp(\varphi^m(\alpha)) \leq 0$ for all positive integers $m < n$.
\end{theorem}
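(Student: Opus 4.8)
The statement is an "if not, infinitely often $\varphi^n(\alpha)$ has no primitive prime divisor" contradiction, so I would proceed by a height comparison. For a prime $\fp$ of $K$, say $\fp$ is a \emph{non-primitive} prime divisor of $\varphi^n(\alpha)$ if $v_\fp(\varphi^n(\alpha)) > 0$ and also $v_\fp(\varphi^m(\alpha)) > 0$ for some $0 < m < n$. The key numerical input is the abc (or Mason--Yamanoi) bound applied to the numerator and denominator of $\varphi^n(\alpha)$. The first step is to reduce to a normalized situation: after a linear change of coordinates we may assume $\varphi$ is a ratio of coprime polynomials and track the factorization of $\varphi^n(\alpha) = A_n/B_n$ over the ring of $S$-integers of $K$, where $S$ is a finite set of primes containing all places of bad reduction and the archimedean ones. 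Standard canonical-height estimates give $\log|A_n B_n| \sim d^n h_\varphi(\alpha)$ up to $o(d^n)$, and $h_\varphi(\alpha) > 0$ makes this grow.

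\textbf{Controlling the radical.} The heart of the argument is to bound $\log \rad(A_n B_n)$ — the contribution of \emph{distinct} primes dividing $\varphi^n(\alpha)$ and its "denominator" — from below by $(1 - o(1)) d^n h_\varphi(\alpha)$, using abc. Here is where Lemma~\ref{1} (existence of a Belyi map, in the number field case) or Yamanoi's theorem (in the function field case) enters: one composes $\varphi^n$ with a suitable auxiliary map so that the abc/Vojta inequality applies to the three relevant quantities $A_n$, $B_n$, and $A_n - c B_n$ for appropriate constants, and one extracts that the total height is absorbed, up to $\epsilon d^n$, by the radical $\rad(A_n B_n \cdot(\text{stuff}))$ together with a controlled ramification/conductor term. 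The point is that the $abc$ conjecture forces $\varphi^n(\alpha)$ to have "many" prime factors (large radical), not just large norm.

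\textbf{Bounding the non-primitive part.} Next I would bound the contribution of the non-primitive primes. If $\fp \mid \varphi^n(\alpha)$ and $\fp \mid \varphi^m(\alpha)$ with $m$ minimal, then $\varphi^{n-m}$ fixes $0$ modulo $\fp$ in the sense that $\varphi^{n-m}(0) \equiv 0 \pmod{\fp}$ after reduction — more precisely $\fp$ divides $\varphi^{n-m}(\varphi^m(\alpha)) $ and $\varphi^m(\alpha)\equiv 0$, so $\fp \mid \varphi^{n-m}(0)$ (using $0 \notin \Orb_\varphi(\alpha)$ to ensure $\varphi^{n-m}(0)$ is a well-defined nonzero element after clearing denominators away from $S$). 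Hence the product of all non-primitive prime powers in $\varphi^n(\alpha)$ divides (up to $S$-units and bounded multiplicity controlled by $v_\fp$ of the derivative, i.e. ramification) $\prod_{k=1}^{n-1}\varphi^{k}(0)$, whose logarithmic size is $\sum_{k \le n} O(d^k) = O(d^{n-1})$ — crucially smaller than $d^n h_\varphi(\alpha)$ by a factor of $d$. The excluded maps $\varphi(z) = c z^{\pm d}$ are exactly the degenerate case where $0$ is totally ramified both forward and "backward" and this estimate collapses; the hypothesis $\varphi(z)\ne cz^{\pm d}$ is used to guarantee the ramification multiplicities at primes above $0$ stay bounded.

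\textbf{Conclusion and main obstacle.} Combining: if $\varphi^n(\alpha)$ had no primitive prime divisor, every prime in its numerator would be non-primitive, so $\log\rad$ of the numerator part is $O(d^{n-1})$; feeding this into the abc-derived lower bound $\rad \ge (1-o(1)) d^n h_\varphi(\alpha)$ forces $d^n h_\varphi(\alpha) \le O(d^{n-1}) + o(d^n)$, impossible for large $n$ since $h_\varphi(\alpha) > 0$. The main obstacle I anticipate is the bookkeeping in the application of abc/Yamanoi: one must handle the denominator $B_n$ and the places of bad reduction uniformly in $n$, keep the "error" genuinely $o(d^n)$ (the canonical height estimates and the conductor/ramification terms all need to be $o(d^n)$ or $O(d^{n-1})$), and in the function-field case substitute Yamanoi's theorem for Lemma~\ref{1} since no Belyi map is available — this is the step the introduction flags as requiring "a more difficult theorem." A secondary subtlety is making the reduction $\fp \mid \varphi^{n-m}(0)$ precise when $\fp$ lies over a prime of bad reduction or over the "pole divisor" of some iterate, which is why one enlarges $S$ and works with $S$-integers throughout.
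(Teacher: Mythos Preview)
Your overall architecture---lower-bound the radical of $\varphi^n(\alpha)$ via $abc$, upper-bound the non-primitive part, compare---matches the paper's. But the non-primitive estimate has a genuine gap, and it is precisely where the paper's key idea enters. You argue that a non-primitive prime $\fp$ of $\varphi^n(\alpha)$ must divide $\varphi^{n-m}(0)$ for some $m$, hence the non-primitive radical is bounded by $\sum_{k=1}^{n-1} h(\varphi^k(0))$, which you call ``$O(d^{n-1})$, crucially smaller than $d^n h_\varphi(\alpha)$ by a factor of $d$.'' But when $0$ is not preperiodic (exactly the new case---the preperiodic case is Ingram--Silverman), one has $h(\varphi^k(0)) = d^k h_\varphi(0) + O(1)$ with $h_\varphi(0) > 0$, so
\[
\sum_{k=1}^{n-1} h(\varphi^k(0)) \;=\; \frac{h_\varphi(0)}{d-1}\, d^n + O(n).
\]
Writing this as $O(d^{n-1})$ just hides the constant $\tfrac{d}{d-1}h_\varphi(0)$; the comparison with your lower bound $(1-o(1))d^n h_\varphi(\alpha)$ reduces to the fixed inequality $h_\varphi(\alpha) > h_\varphi(0)/(d-1)$, which need not hold (take $\alpha$ of small canonical height and $0$ of large canonical height). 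No ``for $n$ large'' saves you, because both sides scale the same way in $n$.

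The paper repairs this with Proposition~\ref{old}. Instead of working with all of $\varphi^n(\alpha)$, it fixes $i$ and a factor $F\mid P_i$ whose roots $\beta_j$ are all \emph{non-periodic} (this is where $\varphi\ne cz^{\pm d}$ is actually used, via Lemma~\ref{lemnon}, to manufacture enough non-periodic preimages of $0$---not, as you suggest, to bound ramification multiplicities). Roth--$abc$ is applied to the fixed $F$ at the moving point $\varphi^{n-i}(\alpha)$. For the non-primitive primes dividing $F(\varphi^{n-i}(\alpha))$, the crucial observation is: if $\fp$ also divides $\varphi^m(\alpha)$ with $m > n-i-B$, then modulo $\fp$ the root $\beta_j$ lies in a cycle of length $\le B+i$; since $\beta_j$ is globally non-periodic, only finitely many such $\fp$ exist, independently of $n$. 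Hence the non-primitive contribution is bounded by $\sum_{\ell\le n-i-B} h(\varphi^\ell(\alpha))\le \tfrac{d}{(d-1)d^{i+B}} h(\varphi^n(\alpha)) + O(n)$, which \emph{can} be made smaller than any $\delta\cdot h(\varphi^n(\alpha))$ by taking $B$ large. Your route via $\varphi^k(0)$ has no analogous cutoff parameter $B$, so the geometric sum cannot be driven below the $abc$ lower bound.
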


We will say that
$\varphi$ is {\bf dynamically ramified} if there at most are finitely many points $\gamma$ such that
$\varphi^n(\gamma)= 0$ and $e_{\varphi^n}(\gamma/0) = 1$ for some
$n$, where $e_{\varphi^n}(\gamma/0)$ is the ramification index of
$\varphi^n$ at $\gamma$ over 0.  

\begin{theorem}\label{main2}
  Let $K$ be an $abc$ field, let $\varphi \in K(x)$ have degree
  $d > 1$, and let $\alpha \in K$ have $h_\varphi(\alpha) > 0$
  and that $0 \notin \Orb_\varphi(\alpha)$.
  Suppose $\varphi$ is not dynamically ramified.  Then for
  all but finitely many positive integers $n$, there is a prime $\fp$ of $K$ such that
  $v_\fp(\varphi^n(\alpha)) = 1$ and $v_\fp(\varphi^m(\alpha)) \leq 0$
  for all positive integers $m < n$.   
\end{theorem}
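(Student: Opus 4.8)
The plan is to deduce Theorem~\ref{main2} from Theorem~\ref{main1} by a height/ramification bookkeeping argument, comparing two quantities: the ``naive'' primitive-divisor height and a weighted version that counts primes with multiplicity. First I would set up notation: for each $n$ let $D_n = \varphi^n(\alpha)$ (as a point, or as an element of $K^\times$ after clearing denominators along the orbit, using $0 \notin \Orb_\varphi(\alpha)$), and define the \emph{primitive part} $P_n$ of $D_n$ to be the product of prime powers $\fp^{v_\fp(D_n)}$ over those $\fp$ with $v_\fp(D_n) > 0$ but $v_\fp(D_m) \le 0$ for all $0 < m < n$. Theorem~\ref{main1} already tells us that (under its hypotheses, in particular $\varphi(z) \neq cz^{\pm d}$, which is implied by ``not dynamically ramified'') $P_n$ is nontrivial for all but finitely many $n$; what remains is to upgrade ``nontrivial'' to ``contains a prime of exact multiplicity one.'' So the crux is to show $\log N(P_n) - \log N(P_n^{\mathrm{sq}}) \to \infty$, where $P_n^{\mathrm{sq}}$ collects the primes dividing $P_n$ to order $\ge 2$, which then forces a multiplicity-one prime.

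The key input is that ``not dynamically ramified'' forces the primitive primes to be mostly unramified over $0$: I would argue that if $\fp$ is a primitive prime for $D_n$ with $v_\fp(D_n) = e \ge 2$, then, reducing $\varphi^n$ modulo $\fp$ and looking at the local picture near the point $\bar\alpha$ mapping to $\bar 0$, the multiplicity $e$ is bounded below by a ramification index $e_{\varphi^n}(\gamma/0)$ for some geometric point $\gamma$ over $0$; more precisely, outside finitely many bad primes (those dividing a discriminant/resultant controlling good reduction and the separability of $\varphi^n$ over $0$), a primitive prime with $v_\fp(D_n) \ge 2$ corresponds to a $\gamma$ with $\varphi^n(\gamma) = 0$ and $e_{\varphi^n}(\gamma/0) \ge 2$. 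Since $\varphi$ is not dynamically ramified, for \emph{every} $n$ the zeros $\gamma$ of $\varphi^n$ over $0$ with $e_{\varphi^n}(\gamma/0) = 1$ all lie in a single finite set $S$ independent of $n$; equivalently, the ``wildly/tamely ramified part'' of the fiber $(\varphi^n)^{-1}(0)$ has bounded size, so $\log N(P_n^{\mathrm{sq}})$ is $O(1) + O(\log(\text{bad primes}))$ — in particular it is dominated by any term growing like a positive multiple of $d^n h_\varphi(\alpha)$.

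Next I would quantify the growth of $\log N(P_n)$ itself. Running the same $abc$/Yamanoi estimate that proves Theorem~\ref{main1} — but now being careful to track the \emph{radical} versus the full valuation — gives a lower bound $\log N(\mathrm{rad}(P_n)) \ge (1-\epsilon)\, d^n h_\varphi(\alpha) - o(d^n)$ for the squarefree radical, using that the non-primitive part of $D_n$ is absorbed into earlier orbit terms and hence into a lower-order radical, together with the hypothesis $h_\varphi(\alpha) > 0$ (Call--Silverman canonical height) to guarantee $d^n h_\varphi(\alpha) \to \infty$ with a positive main term. Combining the last two steps: $\log N(\mathrm{rad}(P_n)) \to \infty$ while the contribution to $\mathrm{rad}(P_n)$ from primes of multiplicity $\ge 2$ is bounded, so for $n$ large there must be at least one prime $\fp$ with $v_\fp(D_n) = 1$ and $v_\fp(D_m) \le 0$ for $m < n$, which is exactly the conclusion.

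The main obstacle I expect is the second step — making precise and uniform the claim that a primitive prime of multiplicity $\ge 2$ ``comes from'' a genuine ramification point of $\varphi^n$ over $0$, uniformly in $n$ and with only finitely many exceptional primes. This requires a careful good-reduction analysis of the morphism $\varphi^n : \bP^1 \to \bP^1$ near the fiber over $0$ (controlling the different/ramification divisor, handling the finitely many primes of bad reduction, and dealing with the point $\infty$ and with primes where $\alpha$ or a pole of $\varphi$ reduces badly), and it is where the precise definition of ``dynamically ramified'' has to be leveraged to get a bound independent of $n$. Everything else is a repackaging of the height inequalities already used for Theorem~\ref{main1}.
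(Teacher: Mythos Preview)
Your proposal contains a genuine gap, and also a confusion about the definition of ``dynamically ramified.'' You write that since $\varphi$ is not dynamically ramified, ``the zeros $\gamma$ of $\varphi^n$ over $0$ with $e_{\varphi^n}(\gamma/0)=1$ all lie in a single finite set $S$''; but this is exactly the definition of $\varphi$ \emph{being} dynamically ramified. Not dynamically ramified means there are \emph{infinitely} many unramified preimages. More seriously, the central claim of your second paragraph is false: there is no reason that a primitive prime $\fp$ with $v_\fp(\varphi^n(\alpha))\ge 2$ must correspond to a geometric ramification point of $\varphi^n$ over $0$. Writing $P_n(x)=\prod_j (x-\gamma_j)^{e_j}$ over $\Kbar$ and choosing $\fq\mid\fp$, one has $v_\fq(P_n(\alpha))=\sum_j e_j\, v_\fq(\alpha-\gamma_j)$; if $\alpha\equiv\gamma_j\pmod{\fq^2}$ for a single \emph{simple} root $\gamma_j$ (and $\alpha\not\equiv\gamma_{j'}$ for $j'\ne j$), then $v_\fp(\varphi^n(\alpha))\ge 2$ with no ramification in sight. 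Such ``accidentally close'' primes are not contained in any finite bad set independent of $n$; bounding their contribution is precisely the content of the $abc$/Roth inequality, not a consequence of good reduction plus ``not dynamically ramified.'' So your bound $\log N(P_n^{\mathrm{sq}})=O(1)$ cannot be obtained by the mechanism you describe.

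The paper's argument uses ``not dynamically ramified'' in a different and much more limited way: it guarantees the existence of a fixed $i$ and a factor $F$ of $P_i$, of degree $\ge 8$, whose roots are \emph{simple} roots of $P_i$ (and non-periodic, not in the forward orbit of $0$). One then applies Roth--$abc$ to the squarefree polynomial $F$ evaluated at $\varphi^{n-i}(\alpha)$: the radical is at least $(\deg F-3)h(\varphi^{n-i}(\alpha))+O(1)$, while the total weighted sum $\sum v_\fp\,\N_\fp$ is at most $(\deg F)h(\varphi^{n-i}(\alpha))+O(1)$, so the primes with $v_\fp(F(\varphi^{n-i}(\alpha)))\ge 2$ contribute at most $\tfrac{1}{2}(\deg F)h+O(1)$. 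Subtracting leaves $\sum_{v_\fp(F(\cdot))=1}\N_\fp\ge (\tfrac{1}{2}\deg F-3)h+O(1)\ge h+O(1)$. Proposition~\ref{old} then kills the non-primitive contribution as in Theorem~\ref{main1}, and finally the fact that the roots of $F$ are simple in $P_i$ (so $F$ is coprime to $P_i/F$) transfers $v_\fp(F(\varphi^{n-i}(\alpha)))=1$ to $v_\fp(\varphi^n(\alpha))=1$. The squareful bound thus comes from the same height inequality, not from a separate ramification analysis.
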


In fact, Theorem~\ref{main2} can never hold for any $\varphi$ that {\it is}
dynamically ramified; see Remark~\ref{dyn}.  This is most easily seen
in the case of maps such as $\varphi(x) = (x-a)^2$,  which have the
property that $\varphi^n(\alpha)$ is always a perfect square because
$\varphi^n$ itself is a perfect square in the field of rational
functions.  
 
Theorem~\ref{main2} shows that the $abc$-conjecture implies what Jones
and Boston call the ``Strong Dynamical Wieferich Prime Conjecture''
\cite[Conjecture 4.5]{JB} .  Silverman \cite{SilW} had earlier shown
that the $abc$-conjecture implies a logarithmic lower bound on the
growth of the number of Wieferich primes; a Wieferich prime is a prime $p$
for which $2^{p-1} \not\equiv 1 \pmod{p^2}$.

In keeping with the terminology of \cite{IS, Schinzel}, we say that
$\fp$ is a {\bf primitive prime factor} of $\varphi^n(\alpha)$ if
$v_\fp(\varphi^n(\alpha)) > 0$ and $v_\fp(\varphi^m(\alpha)) \leq 0$
for all $m < n$.  We say that $\fp$ is a {\bf square-free primitive
 prime factor} if $v_\fp(\varphi^n(\alpha)) = 1$ and
$v_\fp(\varphi^m(\alpha)) \leq 0$ for all $m < n$.  Then
Theorem~\ref{main1} says that $\varphi^n(\alpha)$ has a primitive
prime factor for all but finitely many $n$ while Theorem~\ref{main2}
says that $\varphi$ has a square-free primitive prime factor for all but
finitely many $n$.

Theorems \ref{main1} and \ref{main2} may also be stated in terms of
{\bf wandering} $\alpha$.  We say that $\alpha$ is wandering if $\varphi^n(\alpha) \not= \varphi^m(\alpha)$ for all $n > m > 0$;
this is equivalent to saying that $\Orb_\varphi(\alpha)$ is infinite. It follows
immediately from Northcott's theorem that $h_\varphi(\alpha) \not= 0$ if
and only if $\alpha \in K$ is  wandering for $\varphi \in K(x)$, where $K$
is a number field and $\deg \varphi > 1$ (see \cite{CS}).   
By work of Benedetto and Baker \cite{BenFin, BakerF}, one has the same
result for non-isotrivial rational functions over a function
field. A rational function over a function field $K$ is said to be
{\bf isotrivial} if it cannot be defined over a finite extension of the field of constants of $K$,
up to change of coordinates; more precisely we say that $\varphi$ is
isotrivial if there exists $\psi \in \Kbar(x)$ of degree 1 such that
$(\psi^{-1} \circ \varphi \circ \psi) \in \kbar(x)$, where $\psi^{-1}$ is
the compositional inverse of $\psi$ (i.e., $\psi^{-1}(\psi(x)) = x$ in
$\Kbar(x)$).

Baker's result says that if $K$ is a function field and $\varphi \in
K(x)$ is a non-isotrivial map with $\deg \varphi > 1$, then a point
$\alpha \in K$ is wandering if and only if $h_\varphi(\alpha) \not= 0$.  

Thus, the following are immediate corollaries of Theorem \ref{main1}
and \ref{main2}.  
 
\begin{cor}\label{cor1}
  Let $K$ be an $abc$ field, let $\varphi \in K(x)$ have degree $d
  > 1$, and let $\alpha \in K$ be a wandering point such
  that $0 \notin \Orb_\varphi(\alpha)$.  Suppose that $\varphi \not= c
  z^{\pm d}$ for any $c \in K$ and that $\varphi$ is
  non-isotrivial if $K$ is a function field. Then for all but finitely
  many positive integers $n$, there is a prime $\fp$ of $K$ such that
  $v_\fp(\varphi^n(\alpha)) > 0$ and $v_\fp(\varphi^m(\alpha)) \leq 0$
  for all  positive integers $m < n$.
\end{cor}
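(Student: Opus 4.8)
The plan is to derive Corollary~\ref{cor1} directly from Theorem~\ref{main1}, the only work being to replace the hypothesis ``$\alpha$ is wandering'' by the hypothesis ``$h_\varphi(\alpha) > 0$'' used in Theorem~\ref{main1}. I would first recall that the Call--Silverman canonical height $h_\varphi$ is a nonnegative function on $\bP^1(\Kbar)$: it is constructed as the limit of $d^{-n} h(\varphi^n(\cdot))$, where $h$ is a Weil height that is bounded below, so nonnegativity is immediate. Consequently, to get $h_\varphi(\alpha) > 0$ it is enough to show $h_\varphi(\alpha) \neq 0$.

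Next I would feed in the appropriate Northcott-type statement according to which of the two kinds of $abc$ field we are in. If $K$ is a number field, then by Northcott's theorem applied to canonical heights (as in \cite{CS}) one has $h_\varphi(\alpha) = 0$ if and only if $\alpha$ is preperiodic, i.e. if and only if $\Orb_\varphi(\alpha)$ is finite; hence a wandering $\alpha$ has $h_\varphi(\alpha) \neq 0$. If instead $K$ is a characteristic-zero function field of transcendence degree $1$ and $\varphi$ is non-isotrivial, then the theorems of Benedetto and Baker \cite{BenFin, BakerF} give exactly the same dichotomy: $\alpha \in K$ is wandering if and only if $h_\varphi(\alpha) \neq 0$. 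In either case we conclude $h_\varphi(\alpha) > 0$.

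Finally, the remaining hypotheses of Theorem~\ref{main1} --- namely $0 \notin \Orb_\varphi(\alpha)$ and $\varphi \neq c z^{\pm d}$ for all $c \in K$ --- are literally among the hypotheses of Corollary~\ref{cor1}, so Theorem~\ref{main1} applies and yields the asserted primitive prime $\fp$ for all but finitely many $n$. I do not expect any genuine obstacle; the one point that deserves attention is that Northcott's theorem is false over function fields, so in that case it must be replaced by the non-isotriviality assumption together with \cite{BenFin, BakerF}, which is precisely why non-isotriviality appears in the statement of the corollary.
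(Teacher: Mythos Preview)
Your proposal is correct and is essentially identical to the paper's own derivation: the paper explicitly states that Northcott's theorem (for number fields) and the Benedetto--Baker results (for non-isotrivial maps over function fields) give the equivalence of ``wandering'' with $h_\varphi(\alpha)\neq 0$, and then declares the corollary immediate from Theorem~\ref{main1}. There is nothing to add.
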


\begin{cor}\label{cor2}
  Let $K$ be an $abc$ field, let $\varphi \in K(x)$ have degree
  $d >  1$, and let $\alpha \in K$ be a wandering point such
  that $0 \notin \Orb_\varphi(\alpha)$.  Suppose $\varphi$ is not
  dynamically ramified and that $\varphi$ is
  non-isotrivial if $K$ is a function field.  Then for all but
  finitely many positive integers $n$, there is a prime $\fp$ of $K$ such that
  $v_\fp(\varphi^n(\alpha)) = 1$ and $v_\fp(\varphi^m(\alpha)) \leq 0$
  for all positive integers $m < n$.
\end{cor}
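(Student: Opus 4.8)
The plan is to deduce Corollary~\ref{cor2} directly from Theorem~\ref{main2}. Every hypothesis of Theorem~\ref{main2} except ``$h_\varphi(\alpha) > 0$'' is assumed verbatim in Corollary~\ref{cor2} ($K$ an $abc$ field, $\deg\varphi = d > 1$, $0 \notin \Orb_\varphi(\alpha)$, and $\varphi$ not dynamically ramified), so the only point to verify is that the wandering hypothesis implies $h_\varphi(\alpha) > 0$.

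When $K$ is a number field, this is Northcott's theorem in the form recalled in the introduction (see \cite{CS}): for $\beta \in K$ one has $h_\varphi(\beta) = 0$ if and only if $\Orb_\varphi(\beta)$ is finite, i.e.\ if and only if $\beta$ fails to be wandering. Since the canonical height is nonnegative, $\alpha$ wandering forces $h_\varphi(\alpha) > 0$.

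When $K$ is a characteristic-zero function field of transcendence degree $1$, one uses instead the finiteness theorem of Baker \cite{BakerF}, building on Benedetto \cite{BenFin}: for a \emph{non-isotrivial} $\varphi \in K(x)$ with $\deg\varphi > 1$, again $h_\varphi(\beta) = 0$ if and only if $\beta$ is preperiodic. This is exactly where the non-isotriviality hypothesis in Corollary~\ref{cor2} is used. As before, combining this dichotomy with $h_\varphi \geq 0$ and the assumption that $\alpha$ is wandering yields $h_\varphi(\alpha) > 0$.

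Having established $h_\varphi(\alpha) > 0$ in both cases, Theorem~\ref{main2} applies and produces, for all but finitely many $n$, a prime $\fp$ of $K$ with $v_\fp(\varphi^n(\alpha)) = 1$ and $v_\fp(\varphi^m(\alpha)) \leq 0$ for all $0 < m < n$, which is precisely the conclusion of Corollary~\ref{cor2}. There is no real obstacle here: the corollary is a purely formal consequence of Theorem~\ref{main2} together with the Northcott-type dichotomy, and all of the substantive work lies in the proof of Theorem~\ref{main2} itself (and, in the function-field setting, in the Baker--Benedetto finiteness results, which we invoke as a black box).
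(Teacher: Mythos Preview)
Your proposal is correct and matches the paper's approach exactly: the paper states that Corollaries~\ref{cor1} and~\ref{cor2} are immediate consequences of Theorems~\ref{main1} and~\ref{main2} once one uses Northcott's theorem (in the number field case) and the Baker--Benedetto result (in the non-isotrivial function field case) to pass from ``$\alpha$ wandering'' to ``$h_\varphi(\alpha) \not= 0$''. There is nothing to add.
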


The strategy of the proofs of Theorems \ref{main1} and \ref{main2} is
fairly simple.  First, we show, in Propositions \ref{Roth-number} and
\ref{Roth-function}, that if $F$ is a polynomial of reasonably high degree
without repeated roots, then for any $\gamma$ of large height,
the product of the distinct prime factors of $F(\gamma)$ is large,
assuming the $abc$-conjecture in the number field case.  We then apply
this to an appropriate factor $F$ of the numerator of a power
$\varphi^i$ of $\varphi$, after proving, in Proposition \ref{old},
that the product of the distinct factors of
$\prod_{\ell=1}^{n-1}\varphi^\ell(\alpha)$ that are also factors of
$F(\varphi^{n-i}(\alpha))$ must be very small.  With finitely many
exceptions, any prime that divides $F(\varphi^{n-i}(\alpha))$ also
divides $\varphi^n(\alpha)$, so $\varphi^n(\alpha)$ must then have a
factor that is not a factor of $\varphi^m(\alpha)$ for any $m < n$.

An outline of the paper is as follows. We begin by setting our
notation and terminology in Section~\ref{prelim}.  In Section~\ref{number} we
modify a result of Granville \cite{Granville} that enables
us to say, roughly, that polynomials without repeated factors take on
``reasonably square-free'' values in general, assuming the
$abc$-conjecture; this is Proposition~\ref{Roth-number}.  Then, in Section~\ref{function}, we derive the same
result for function fields, unconditionally, using recent work of Yamanoi \cite{Yam}; this
is Proposition \ref{Roth-function}.  
This enables us to give a proof of our main results in
Section~\ref{main}, using Proposition \ref{old}.  We end with some
applications of Theorem~\ref{main2} to iterated Galois groups, in
Section~\ref{application}.  

\begin{remark}
  When $0$ is in $\Orb_\varphi(\alpha)$ and $\alpha$ is wandering,
  there is a unique $M$ such that $\varphi^{M}(\alpha)=0$.  Hence, Theorems \ref{main1} and \ref{main2}
  still hold if we impose the additional condition $m \not= M$ on the
  positive integers $m < n$ in the statements of these theorems.    
\end{remark}

{\bf Acknowledgments.}  The authors would like to thank Xander Faber, Dragos Ghioca,
Andrew Granville, Patrick Ingram, Rafe Jones, Joseph Silverman, and
Paul Vojta for many helpful conversations.  This paper was written
while the authors were visiting ICERM in Providence and it is our
pleasure to thank ICERM for its hospitality.

\section{Preliminaries}\label{prelim}

We set the following:
\begin{itemize}
\item $K$ is a number field or function field of characteristic 0;
\item if $K$ is a function field, we let $k$ denote its field of
  constants;
\item $\fp$ is a finite prime of $K$
\item $k_\fp$ is the residue field of $\fp$;
\item if $K$ is a number field, then we let $\N_\fp = \frac{\log
    (\#k_\fp)}{[K:\bQ]}$;
\item if $K$ is a function field, then we let $\N_\fp = [k_\fp:k]$;  
\item $\varphi \in K(x)$ is a rational function of degree $d > 1$.
\end{itemize}

All of this is completely standard with one exception: the quantity
$\N_\fp$ has been normalized in the case of number fields.  We divide by $[K:\bQ]$ in
our definition that we can use the same proofs (without reference to
possible normalization factors) for number fields and function fields
in Section~\ref{main}.  

When $K$ is a number field, we let $\fo_K$ denote the ring of
algebraic integers of $K$ as usual.  When $K$ is a function field, we
choose a prime $\fq$, and let $\fo_K$ denote the set $\{ z \in K \; |
\; \text{$v_\fp(z) \geq 0$ for all primes $\fp \not= \fq$ in $K$} \}$.  

If $K$ is a function field, the height of $\alpha \in K$ 
is
\begin{equation*}
h(\alpha) = - \sum_{\text{ primes $\fp$ of $K$ }} \min
(v_\fp(\alpha), 0) \N_\fp.
\end{equation*} 
If $K$ is a number field,  the height of $\alpha \in K$ 
is
\begin{equation}\label{ht-number}
h(\alpha) = - \sum_{\text{ primes $\fp$ of $\fo_K$ }}   \min
(v_\fp(\alpha), 0) \N_\fp +
\frac{1}{[K:\bQ]}  \sum_{\sigma: K \hookrightarrow \bC}
  \max (\log |\sigma(\alpha)|, 0). 
\end{equation}
(Note that the $\sigma: K \hookrightarrow \bC$ is simply all maps from
$K$ to $\bC$; in particular, we do not identify complex conjugate
embeddings in any way.)
In either case, the product formula gives the inequality
\begin{equation}\label{ineq}
\sum_{v_\fp(\alpha) > 0} v_\fp(\alpha) \N_\fp \leq h(\alpha).
\end{equation} 

We will work with the canonical height $h_\varphi$, which is defined
as 
\begin{equation}\label{candef}
h_\varphi(z) = \lim_{n \to \infty} \frac{h(\varphi^n(z)}{d^n}.
\end{equation}
The convergence of the right-hand side follows from a telescoping
series argument due to Tate.  The canonical height has the following important
properties:
\begin{equation}\label{can1}
\text{ $h_\varphi(\varphi(z)) = d h_\varphi(z)$ for all $z \in K$;}
\end{equation}
\begin{equation}\label{can2}
\text{ there is a constant $C_\varphi$ such that $|h(z) - h_\varphi(z)| 
  < C_\varphi$ for all $z \in K$.} 
\end{equation}
It follows immediately from \eqref{can1} and \eqref{can2} that
\begin{equation}\label{can3}
h_\varphi(\alpha) \not= 0 \Longleftrightarrow \lim_{s \to
    \infty} h(\varphi^s(\alpha)) =  \infty.  
\end{equation}

We refer the readers to the work of Call and Silverman \cite{CS} for details on the proofs of the
various properties of $h_\varphi$.

We say that a point $\alpha$ is {\bf preperiodic} if there exist $n >
m > 0$ such that $\varphi^m(\alpha) = \varphi^n(\alpha)$; we will
say that $\alpha$ is {\bf periodic} if there is an $n > 0$ such that
$\varphi^n(\alpha) = \alpha$.  Note that a point is wandering if and only
if it is not preperiodic.

We write $\varphi(x) = P(x)/Q(x)$ for $P, Q \in \fo_K[x]$ having no common
roots in $\Kbar$.  Then we may write $\varphi^i(x) = P_i(x)/Q_i(x)$,
where $P_i$ and $Q_i$ are defined recursively in terms of $P$ and $Q$.
This is most easily explained by passing to homogenous coordinates.
We let $p(x,y)$ and $q(x,y)$ be the degree $d$ homogenizations of $P$
and $Q$ respectively.  Then we may define $p_i(x,y) = p(p_{i-1}(x,y),
q_{i-1}(x,y))$ and $q_i(x,y) = q(p_{i-1}(x,y), q_{i-1}(x,y))$.
Letting $P_i = p_i(x,1)$ and $Q_i = q_i(x,1)$ then gives our $P_i$ and
$Q_i$.  We will say that $\fp$ is a prime of {\bf good reduction} if
$P(x)$ and $Q(x)$ have no common root modulo $\fp$ and the polynomials
$p(1,y)$ and $q(1,y)$ have no common roots modulo $\fp$.  When
$\fp$ is a prime of good reduction, $\varphi$ induces a well-defined
map from $k_\fp \cup \infty$ to itself.  To describe this, let $r_p$
be the reduction map $r_p: K \lra k_\fp \cup \infty$ given by $r_p(z)
= z \pmod {\fp}$ if $v_\fp(z) \geq 0$ and $r_p(z) = \infty$ if
$v_\fp(z) < 0$.  Then letting $\varphi(r_p(z)) = r_p(\varphi(z))$
defines a well-defined map on residue classes and thus gives the desired
map. We will make use of this in Proposition~\ref{old}.

When $K$ is a function field, we say that $\varphi$ is isotrivial if
$\varphi = \sigma \psi
\sigma^{-1}$ for some $\sigma \in \Kbar(x)$ with $\deg \sigma = 1$ and
some $\psi \in \kbar(x)$, where $k$ is the field of constants in $K$.
Here $\sigma^{-1}$ is the compositional inverse of $\sigma$; we have
$\sigma(\sigma^{-1}(x)) = \sigma^{-1}(\sigma(x)) = x$ in
the field $\Kbar(x)$.  

Finally, a few words on notation.  Throughout this paper,  a {\bf finite
  set}  may be empty, so when we say ``there exist finitely many''
that simply means that there are not infinitely many.  The zeroth iterate
of any map is taken to be the identity; in particular, $\varphi^0(x) =
x$.

\section{Roth-$abc$ for number fields}\label{number}
The main result of this Section, Proposition~\ref{Roth-number}, is a
direct translation of \cite[Theorem 5]{Granville} into the more
general setting of number fields.  Following Granville, we refer to
this as a ``Roth-abc'' type result, because it can be interpreted as
a strengthening of Roth's theorem \cite{Roth}  (in particular the $- 2 - \epsilon$
here plays the same role as the the $2+\epsilon$ in Roth's theorem).
The techniques are the same as those of \cite{Granville}.  We include
a full proof for the sake of completeness.  The methods here are also
quite similar to those of \cite{Elkies} (see especially page 105).

Let $K$ be a number field. We will be using a version of the ``$abc$-Conjecture for Number
Fields''.    Recall our definition of $h(z)$ for $z \in K$ from
\eqref{ht-number}.  For $n \geq 2$, we may extend  this definition to
an  $n$-tuple  $(z_1, \dots, z_n) \in K^n
\setminus \{ (0,\dots,0) \}$ by letting
\begin{equation}
\begin{split}
  h(z_1,\dots, z_n) =  - & \sum_{\text{ primes $\fp$ of $\fo_K$ }}   \min
  (v_\fp(z_1),\dots,  v_\fp(z_n)) \N_\fp  \\
& +  \frac{1}{[K:\bQ]}  
\sum_{\sigma: K \hookrightarrow \bC}
  \max (\log |\sigma(z_1)|, \dots, \log |\sigma(z_n)|). 
\end{split}
\end{equation}

Note that when $z_1 \not= z_2$, we have $h(z_1,z_2) = h(z_1/z_2, 1) =
h(z_1/z_2)$.

 For any $(z_1, \dots, z_n) \in (K^*)^n$, we define 
\[ I(z_1, \dots, z_n) = \{ \text{primes $\fp$ of $\fo_K$} \; | \;
\text{$v_\fp(z_i) \not= v_\fp(z_j)$ for some $1 \leq i,j \leq n$}\} 
\]
and let
\begin{displaymath}
\rad(z_1,\dots, z_n) = \sum_{\mathfrak p \in
  I(z_1,\dots, z_n)} \N_\fp
\end{displaymath}

With all of this notation set, the
$abc$-Conjecture for number fields says the following.
\begin{conjecture}
For any $\epsilon > 0$, there exists a constant $C_{K,
  \epsilon} > 0$ such that for all $a, b, c \in K^*$ satisfying $a + b  = c$, we have
\begin{displaymath}
h(a, b, c) < (1 + \epsilon) (\rad(a, b, c)) + C_{K,\epsilon}.
\end{displaymath}
\end{conjecture}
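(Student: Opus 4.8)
A direct proof is of course not available: this is the statement conjectured by Vojta \cite{Vojta-Diophantine-SV} and used here as the standing hypothesis defining an ``$abc$-field.'' The only sensible plan is a reduction to the classical $abc$-conjecture of Masser--Oesterl\'e over $\bQ$, and I sketch how I would attempt it.

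First I would normalize. Dividing $a + b = c$ by $c$ replaces $(a,b,c)$ by $(a/c,\, b/c,\, 1)$; since $h$ is a height on $\bP^2(K)$ we have $h(a,b,c) = h(a/c, b/c, 1)$, and since the set $I(a,b,c)$ of primes at which the $v_\fp$-values disagree depends only on the point $(a:b:c) \in \bP^2(K)$, also $\rad(a,b,c) = \rad(a/c, b/c, 1)$. So it suffices to bound $h(a,b)$ for $a + b = 1$ in terms of the normalized number of primes $\fp$ with $v_\fp(a) \neq 0$ or $v_\fp(1-a) \neq 0$ --- that is, to bound the height of a point $a \in \bP^1(K) \setminus \{0,1,\infty\}$ by a count of its primes of bad reduction relative to $(0)+(1)+(\infty)$. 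Next I would try to descend to $\bQ$. Replacing $a, b$ by their norms $\Norm_{K/\bQ}(a), \Norm_{K/\bQ}(b)$ fails immediately, since $\Norm_{K/\bQ}(a) + \Norm_{K/\bQ}(b) \neq 1$ and there is no $abc$-triple over $\bQ$ to invoke. The remaining natural device is a Belyi-type covering descent in the spirit of \cite{Elkies}: compose $a$ with a map $\bP^1 \to \bP^1$ over $\bQ$ ramified only above $\{0,1,\infty\}$ to manufacture $\bQ$-rational data, apply $abc$ over $\bQ$ there, and transport the resulting bound back to $K$.

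The step I expect to be the real obstacle is quantitative control. One must recover the inequality with the \emph{sharp} exponent $1+\epsilon$ on the radical and with a constant $C_{K,\epsilon}$ uniform in $a, b, c$ (depending only on $K$ and $\epsilon$); tracking how the radical and the different of $K/\bQ$ interact under any such descent degrades the exponent, and no unconditional reduction achieving the sharp form is known. This is exactly why the number-field statement is carried as a hypothesis rather than deduced, and why the genuine content of this section is not the conjecture itself but its consequence, the Roth--$abc$ inequality of Proposition~\ref{Roth-number} --- for which the normalization above is the natural first input. Over a function field even the Belyi descent is unavailable, cf.\ Lemma~\ref{1}, which is why that case later relies on Yamanoi's theorem \cite{Yam} instead.
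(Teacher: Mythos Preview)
Your assessment is correct: the statement is a \emph{conjecture}, and the paper offers no proof of it whatsoever. It is stated solely as the standing hypothesis that defines an ``$abc$-field'' in the number-field case, and every subsequent result in Section~\ref{number} (in particular Proposition~\ref{Roth-number}) is conditional on it. So there is nothing in the paper to compare your proposal against.

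Your sketch of a hypothetical reduction to the $abc$-conjecture over $\bQ$ via normalization and a Belyi-type descent, together with your identification of the sharp-exponent obstacle, is reasonable commentary but goes well beyond anything the paper attempts; the authors simply assume the conjecture and move on. One minor caution: your remark that ``over a function field even the Belyi descent is unavailable, cf.\ Lemma~\ref{1}'' slightly misreads the role of that lemma --- Lemma~\ref{1} is Belyi's theorem and is used \emph{for} number fields; the point (made in the introduction) is that the analogous Belyi maps do not exist over function fields, which is why Section~\ref{function} bypasses the $abc$-route entirely and invokes Yamanoi's theorem instead.
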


Following Granville \cite{Granville}, we start by proving a homogeneous form of
Roth-$abc$.   Let $S$ be a finite set of finite primes of $K$.  We will say that a pair $(z_1, z_2) \in
\fo_K$ is in {\bf $S$-reduced form} if they have no common prime factors
outside of $S$, that is $\min(v_\fp(z_1), v_\fp(z_2)) = 0$ for
all $\fp \notin S$.

We will use a well-known result of Belyi \cite{Belyi}.  

\begin{lemma}\label{1} Given any homogeneous $f(x, y) \in
  K[x, y]$, we can determine homogeneous
  polynomials $a(x, y), b(x, y), c(x, y) \in \fo_K[x, y]$, all of
  degree $D \geq 1$, with bounded common factors, where $a(x, y)b(x,
  y)c(x, y)$ has exactly $D + 2$ non-proportional linear factors (over
  $\Kbar$), which
  include all the factors of $f(x, y)$, and $a(x, y) + b(x, y) = c(x,
  y)$.
\end{lemma}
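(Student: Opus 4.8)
The plan is to read off $a$, $b$, $c$ directly from a Belyi map. View $f(x,y)$ as defining, through its non-proportional linear factors, a finite set $\Sigma\subset\bP^1(\Kbar)$; since $f\in K[x,y]$, this set is stable under $\Gal(\Kbar/K)$. By Belyi's theorem \cite{Belyi} — whose construction, applied to the $\Gal(\Kbar/K)$-stable set $\Sigma$ (reduce to $K$-rational points by successively composing with minimal polynomials, normalize three of the resulting points to $0,1,\infty$ by an element of $\PGL_2(K)$, and absorb the remaining rational points by the classical maps $z\mapsto c\,z^m(1-z)^n$), returns a morphism defined over $K$ — there is a finite morphism $\beta\colon\bP^1_K\to\bP^1_K$ over $K$ that is unramified over $\bP^1\setminus\{0,1,\infty\}$ and satisfies $\beta(\Sigma)\subseteq\{0,1,\infty\}$. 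Write $\beta=p(x,y)/q(x,y)$ with $p,q\in\fo_K[x,y]$ homogeneous of the same degree $D=\deg\beta\ge 1$ and coprime in $\Kbar[x,y]$ (clear denominators over $\fo_K$, then cancel the gcd). Then $\beta^{-1}(0)=V(p)$, $\beta^{-1}(\infty)=V(q)$, and $\beta^{-1}(1)=V(p-q)$.

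Now set $a=p-q$, $b=q$, $c=p$. These lie in $\fo_K[x,y]$, are homogeneous of degree $D$, and satisfy $a+b=c$ by construction; note $a=p-q$ is genuinely a degree-$D$ form (a difference of two such) and is nonzero since $\beta$ is nonconstant, so the degree claim needs no adjustment. The non-proportional linear factors of $abc=(p-q)\,q\,p$ are exactly the points of $V(p)\cup V(q)\cup V(p-q)=\beta^{-1}(\{0,1,\infty\})$, a set which contains $\Sigma$ because $\beta(\Sigma)\subseteq\{0,1,\infty\}$; hence every linear factor of $f$ occurs among the linear factors of $abc$. Since $\gcd(p,q)=1$, any common factor of two of $a,b,c$ divides $\gcd(p,q)$ and so is trivial over $\Kbar$; over $\fo_K$ the finitely many primes at which the reductions of $a,b,c$ acquire a common factor are determined by $\beta$, hence by $f$ alone, so they form a fixed (bounded) set.

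It remains to check that $abc$ has exactly $D+2$ non-proportional linear factors. The fibers $\beta^{-1}(0),\beta^{-1}(1),\beta^{-1}(\infty)$ are pairwise disjoint, and for each $t$ one has $\#\beta^{-1}(t)=D-\sum_{P\in\beta^{-1}(t)}(e_\beta(P)-1)$. Summing over $t\in\{0,1,\infty\}$ and using that $\beta$ is unramified away from $\{0,1,\infty\}$, the Riemann--Hurwitz identity $\sum_{P\in\bP^1}(e_\beta(P)-1)=2D-2$ gives $\#\beta^{-1}(\{0,1,\infty\})=3D-(2D-2)=D+2$, as wanted. When $\Sigma$ has at most three points one may take $\beta$ a Möbius map and $D=1$.

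The only substantive input is Belyi's theorem; everything after it is bookkeeping. Accordingly, the point that deserves the most care is the invocation of Belyi: ensuring that the map can be taken over $K$ (so that $a,b,c$ have coefficients in $\fo_K$) and, crucially, that its ramification is genuinely confined to $\{0,1,\infty\}$ — it is precisely this concentration of ramification that forces the Riemann--Hurwitz count to collapse to exactly $D+2$.
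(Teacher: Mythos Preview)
Your argument is correct and is precisely the standard derivation of this statement from Belyi's theorem. The paper itself does not give a proof: it simply introduces the lemma with ``We will use a well-known result of Belyi \cite{Belyi}'' and states it without further justification (the surrounding material is adapted from Granville \cite{Granville}, where the same device appears). Your construction---take a Belyi map $\beta$ over $K$ sending the zero set of $f$ into $\{0,1,\infty\}$, write $\beta=p/q$ in coprime homogeneous form of degree $D$, and set $(a,b,c)=(p-q,q,p)$---is exactly how one unpacks the citation, and your Riemann--Hurwitz count $3D-(2D-2)=D+2$ for the total fiber size over $\{0,1,\infty\}$ is the intended mechanism. The only point worth flagging is contextual rather than mathematical: this lemma lives in the paper's number-field section precisely because Belyi's theorem is unavailable over function fields (as the introduction notes), so your care in arranging $\beta$ to be defined over $K$ is well placed.
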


We may then prove the following.  

\begin{proposition}\label{homog} 
Let $f(x,y) \in \fo_K[x,y]$ be a homogeneous polynomial of degree 3 or more without
repeated factors and let $S$ be a a finite set of finite primes in
$K$.  Let $\epsilon > 0$ and let $S$ be a finite set of finite places
of $K$.  Then
\begin{displaymath}
 (\deg f - 2 - \epsilon)(h(z_1, z_2)) \leq \left(\sum_{v_\fp(f(z_1, z_2)) > 0} \N_{\fp}\right) + O(1)
\end{displaymath}
for all $(z_1, z_2) \in \fo_K$ in $S$-reduced form.  
\end{proposition}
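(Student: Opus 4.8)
The plan is to deduce this from Belyi's Lemma~\ref{1} and the $abc$-Conjecture for $K$, following \cite[Theorem~5]{Granville}. First I would apply Lemma~\ref{1} to $f$ to obtain homogeneous $a(x,y),b(x,y),c(x,y)\in\fo_K[x,y]$, each of degree $D\geq1$, with bounded common factors, satisfying $a+b=c$, and such that $a(x,y)b(x,y)c(x,y)$ has exactly $D+2$ non-proportional linear factors over $\Kbar$, among which are the $\deg f$ distinct linear factors of $f$. For a pair $(z_1,z_2)\in\fo_K$ in $S$-reduced form the values $a(z_1,z_2),b(z_1,z_2),c(z_1,z_2)$ lie in $\fo_K$ and satisfy the same additive relation, so the $abc$-conjecture applies to them; the finitely many pairs with $a(z_1,z_2)b(z_1,z_2)c(z_1,z_2)=0$ or $z_2=0$ have $h(z_1,z_2)=O(1)$ and may be set aside. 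Fixing $\epsilon'>0$, to be chosen at the end in terms of $\epsilon$, $D$, and $\deg f$, this yields
\[ h\bigl(a(z_1,z_2),b(z_1,z_2),c(z_1,z_2)\bigr)\leq(1+\epsilon')\,\rad\bigl(a(z_1,z_2),b(z_1,z_2),c(z_1,z_2)\bigr)+O(1), \]
the implied constant depending only on $K$, $\epsilon'$, $f$, and $S$.

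Next I would estimate the two sides. \emph{Left side.} The Belyi map $\beta=a/c$ has degree $D$, so functoriality of Weil heights under a degree-$D$ rational map gives $h\bigl(\beta(z_1/z_2)\bigr)=D\,h(z_1/z_2)+O(1)=D\,h(z_1,z_2)+O(1)$; since $h(a(z_1,z_2),c(z_1,z_2))=h(\beta(z_1/z_2))$ and $h(u,v,w)\geq h(u,w)$, this gives $h\bigl(a(z_1,z_2),b(z_1,z_2),c(z_1,z_2)\bigr)\geq D\,h(z_1,z_2)-O(1)$. \emph{Radical.} Let $G_1,\dots,G_r\in\fo_K[x,y]$ be the distinct irreducible homogeneous factors of $abc$ over $K$ — each a product of a Galois orbit of the $D+2$ non-proportional linear factors, hence squarefree — so $\sum_j\deg G_j=D+2$, and let $J$ index those $G_j$ that divide $f$, so $\sum_{j\in J}\deg G_j=\deg f$. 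Let $T$ be the finite set of primes dividing the bounded common factor of $a,b,c$, a leading coefficient of one of them, or the resultant of two distinct linear factors of $abc$. Since $(z_1,z_2)$ is $S$-reduced, a prime $\fp\notin S\cup T$ lies in $I\bigl(a(z_1,z_2),b(z_1,z_2),c(z_1,z_2)\bigr)$ exactly when it divides precisely one $G_j(z_1,z_2)$, and for $j\in J$ this occurs exactly when $v_\fp(f(z_1,z_2))>0$; hence
\[ \rad\bigl(a(z_1,z_2),b(z_1,z_2),c(z_1,z_2)\bigr)\leq\Bigl(\sum_{v_\fp(f(z_1,z_2))>0}\N_\fp\Bigr)+\sum_{j\notin J}\Bigl(\sum_{\fp\mid G_j(z_1,z_2)}\N_\fp\Bigr)+O(1). \]
Applying the product-formula inequality \eqref{ineq} to each linear factor of each $G_j$ — after a change of coordinates in $\PGL_2$, which leaves heights and radical sums unchanged up to $O(1)$, and after passing to a finite extension of $K$, which changes neither by our normalization of $\N_\fp$ — bounds each inner sum by $(\deg G_j)\,h(z_1,z_2)+O(1)$, so the last term is at most $(D+2-\deg f)\,h(z_1,z_2)+O(1)$.

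Substituting these two bounds into the $abc$ inequality gives
\[ D\,h(z_1,z_2)\leq(1+\epsilon')\Bigl[\Bigl(\sum_{v_\fp(f(z_1,z_2))>0}\N_\fp\Bigr)+(D+2-\deg f)\,h(z_1,z_2)\Bigr]+O(1), \]
which rearranges to $\bigl(\deg f-2-\epsilon'(D+2-\deg f)\bigr)\,h(z_1,z_2)\leq(1+\epsilon')\bigl(\sum_{v_\fp(f(z_1,z_2))>0}\N_\fp\bigr)+O(1)$; dividing through by $1+\epsilon'$ and then taking $\epsilon'$ small enough (possible since $\deg f\geq3$) makes the coefficient of $h(z_1,z_2)$ at least $\deg f-2-\epsilon$, which is the proposition. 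I expect the main obstacle to be the radical estimate: the $D+2$ linear factors are defined only over $\Kbar$, so they must be regrouped into $K$-irreducible pieces, and one must carefully quarantine the finitely many primes at which two distinct linear factors become proportional or at which $a,b,c$ acquire an unexpected common factor — the $S$-reducedness hypothesis is precisely what prevents $z_1$ and $z_2$ from introducing a spurious common factor outside $S$. The remaining ingredients are the standard functoriality of Weil heights and routine bookkeeping of $O(1)$ terms that are uniform in $(z_1,z_2)$.
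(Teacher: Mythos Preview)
Your proposal is correct and follows essentially the same route as the paper: apply Belyi's Lemma~\ref{1}, invoke the $abc$-conjecture for the triple $(a(z_1,z_2),b(z_1,z_2),c(z_1,z_2))$, bound $h(a,b,c)$ below by $D\,h(z_1,z_2)+O(1)$ via functoriality, and bound the radical above by splitting off the factors of $abc$ not coming from $f$. The only notable difference is that the paper handles the non-$f$ part of the radical more directly: it writes the product of the $D+2$ linear factors as $f(x,y)g(x,y)$ for a single homogeneous $g\in\fo_K[x,y]$ of degree $D+2-\deg f$ and uses $\sum_{v_\fp(g(z_1,z_2))>0}\N_\fp\leq h(g(z_1,z_2))\leq(D+2-\deg f)h(z_1,z_2)+O(1)$ in one stroke, avoiding your decomposition into $K$-irreducible $G_j$, the passage to linear factors over~$\Kbar$, and the attendant bookkeeping with $\PGL_2$ and field extensions.
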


\begin{proof}
  We begin by applying Lemma \ref{1} to obtain $a(x, y), b(x, y), c(x,
  y) \in \fo_K[x, y]$ of degree $D$ where $a(x, y)b(x,
  y)c(x, y)$ has exactly $D + 2$ non-proportional linear factors (over $\Kbar$), which
  include all the factors of $f(x, y)$, and $a(x, y) + b(x, y) = c(x,
  y)$.  Write the product of the factors of $a(x, y)b(x, y)c(x, y)$ as $f(x, y)g(x, y)$.

  Then applying the $abc$-Conjecture for number fields, we obtain
  \[(1 - \epsilon/D)(h(a(z_1, z_2), b(z_1, z_2))) \leq
  \left(\sum_{\mathfrak p \in I(a(z_1, z_2), b(z_1, z_2), c(z_1,
    z_2))}\N_{\mathfrak p}\right) + O(1).\] 

  Now, $a$, $b$, and $c$ are coprime and $(z_1, z_2)$ is in $S$-reduced
  form, so, possibly after enlarging $S$, have a finite set $S$ of primes, depending only on $a$,
  $b$, $c$ and $K$ such that $\fp \in I(a(z_1, z_2), b(z_1,
  z_2), c(z_1, z_2))$ if and only $v_\fp(a(z_1,z_2)
  b(z_1,z_2) c (z_1,z_2)) > 0$ for all $\fp \notin S$.
  Since $a(x,y)b(x,y)c(x,y)$ has the same factors as $f(x,y) g(x,y)$,
  we therefore have
\[ \left(\sum_{\mathfrak p \in I(a(z_1, z_2), b(z_1, z_2), c(z_1,
    z_2))}\N_{\mathfrak p}\right) \leq
  \left(\sum_{v_\fp(f(z_1,z_2)) > 0} \N_{\fp}\right) + \left(\sum_{v_\fp(g(z_1,z_2)) > 0} \N_{\fp}\right) + \textrm{ }O(1), 
\]
so 
\begin{equation}\label{multi}
(1 - \epsilon/D)(h(a(z_1, z_2), b(z_1, z_2))) \leq  \left(\sum_{v_\fp(f(z_1,z_2)) > 0} \N_{\fp}\right) + \left(\sum_{v_\fp(g(z_1,z_2)) > 0} \N_{\fp}\right) + \textrm{ }O(1).
\end{equation}

By basic properties of height functions, we have
\begin{equation*}
\sum_{v_\fp(g(z_1, z_2)) > 0} \N_{\mathfrak p} \leq
h(g(z_1, z_2)) \leq  (D + 2 - \deg f)(h(z_1,
z_2)) + O(1),
\end{equation*}
since $g$ has degree $D+2 - \deg f$.    Similarly, we have
\[ h(a(z_1, z_2), b(z_1, z_2)) + O(1) \geq D(h(z_1, z_2)). \]
Substituting these inequalities into \eqref{multi} gives 
\[  (\deg f - 2
- \epsilon)(h(z_1, z_2)) \leq \left(\sum_{v_\fp(f(z_1, z_2)) > 0}
  \N_{\fp}\right) + O(1)\] as desired.
\end{proof}

\begin{proposition}\label{Roth-number}
Let $F(x) \in \fo_K[x]$ be a polynomial of degree 3 or more without
repeated factors.  Then, for any $\epsilon > 0$, there is a constant
$C_{f, \epsilon}$ such that 
\[
\sum_{v_\fp(f(z)) > 0} \N_\fp \geq (\deg F - 2 - \epsilon) h(z)  +
C_{f, \epsilon}
\]
for all $z \in K$.  
\end{proposition}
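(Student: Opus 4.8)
The plan is to deduce Proposition~\ref{Roth-number} from the homogeneous version, Proposition~\ref{homog}, by passing to homogeneous coordinates. Write $F(x)$ of degree $e := \deg F$, and let $f(x,y) = y^e F(x/y)$ be its homogenization, a homogeneous polynomial of degree $e$. The subtlety is that $f$ may \emph{not} have the full complement of distinct linear factors even when $F$ does: if $\deg F < e$ does not occur (it cannot, $e = \deg F$), but the point $[1:0]$ at infinity may or may not be a root of $f$, and in any case $f$ has no repeated factors precisely because $F$ has none and homogenization introduces at most the single new factor $y$ (with multiplicity $e - \deg F = 0$), so $f$ is squarefree of degree $e \geq 3$ and Proposition~\ref{homog} applies to it.

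First I would clear denominators: given $z \in K$, write $z = z_1/z_2$ with $z_1, z_2 \in \fo_K$ in $S$-reduced form for a suitable finite set $S$ (one can always do this; enlarging $S$ finitely is harmless and absorbed into the $O(1)$). Then $F(z) = f(z_1,z_2)/z_2^e$, so $v_\fp(F(z)) = v_\fp(f(z_1,z_2)) - e\, v_\fp(z_2)$. For primes $\fp \notin S$, since $z_1,z_2$ are $S$-reduced, $v_\fp(z_2) > 0$ forces $v_\fp(z_1) = 0$, hence $v_\fp(f(z_1,z_2)) = e \cdot \min(v_\fp(z_1), \dots) = 0$ unless the constant term interferes — more carefully, $f(z_1, z_2) \equiv (\text{leading coeff})\cdot z_1^e \pmod{z_2}$, so a prime dividing $z_2$ divides $f(z_1,z_2)$ only if it divides the leading coefficient, a finite set. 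Thus outside a finite set, $v_\fp(F(z)) > 0 \iff v_\fp(f(z_1,z_2)) > 0$, and the two sums $\sum_{v_\fp(F(z)) > 0}\N_\fp$ and $\sum_{v_\fp(f(z_1,z_2)) > 0}\N_\fp$ differ by $O(1)$. Finally $h(z) = h(z_1/z_2) = h(z_1,z_2)$ when $z_1 \neq z_2$ (noted in the excerpt), and the finitely many $z$ with $z_1 = z_2$, i.e.\ $z = 1$, contribute only to the constant. Substituting into Proposition~\ref{homog} with $\deg f = \deg F$ yields
\[
(\deg F - 2 - \epsilon)\, h(z) \leq \left(\sum_{v_\fp(F(z)) > 0}\N_\fp\right) + O(1),
\]
which is the claimed inequality (absorbing the $O(1)$ into $C_{f,\epsilon}$, noting the inequality in the proposition is stated with $C_{f,\epsilon}$ on the right, so the sign works out).

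The main obstacle I expect is the bookkeeping at the prime $y = 0$ (the point at infinity) and at primes dividing leading or constant coefficients of $F$: one must check that replacing "primes with $v_\fp(F(z)) > 0$" by "primes with $v_\fp(f(z_1,z_2)) > 0$" only changes the count by $O(1)$, uniformly in $z$. This is where enlarging $S$ to include the bad primes (those dividing the leading coefficient of $F$, the resultant-type quantities ensuring $S$-reducedness of $z_1,z_2$, and the finitely many primes from Lemma~\ref{1}) does the job — all of these depend only on $F$ (equivalently $f$) and $K$, not on $z$. A secondary, purely cosmetic point is reconciling the degree-$3$ hypothesis: the homogenization of a degree-$e$ polynomial with $e \geq 3$ has degree $e \geq 3$, so Proposition~\ref{homog} is directly applicable with no loss.
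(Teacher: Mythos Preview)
Your proof is correct and follows essentially the same strategy as the paper: write $z = z_1/z_2$ in $S$-reduced form for a fixed finite $S$ (the paper invokes Minkowski's theorem to arrange $\fo_{K,S}$ a PID, which is the content of your ``one can always do this''), homogenize, and invoke Proposition~\ref{homog}.

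There is one mild difference worth noting. You apply Proposition~\ref{homog} directly to the homogenization $f(x,y)=y^{e}F(x/y)$ of degree $e=\deg F\ge 3$, obtaining the exponent $\deg F-2-\epsilon$ in one shot. The paper instead sets $g(x,y)$ equal to this homogenization and applies Proposition~\ref{homog} to $f(x,y)=y\cdot g(x,y)$ of degree $\deg F+1$; this yields the stronger intermediate bound $(\deg F-1-\epsilon)h(z)\le \sum_{v_\fp(F(z))\ne 0}\N_\fp + O(1)$ counting both zeros and poles of $F(z)$, after which the pole contribution $\sum_{v_\fp(F(z))<0}\N_\fp\le h(z)+O(1)$ is subtracted off. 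Your route is slightly more direct; the paper's detour through $y\cdot g$ is unnecessary here since $g$ itself already has degree $\ge 3$ without repeated factors, but it does make explicit the separate accounting of poles. Either way the bookkeeping you flag (primes dividing the leading coefficient, and independence of $S$ from $z$) is exactly what must be checked, and you have it right.
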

\begin{proof}
  By the Minkowski's theorem on the class group, there is a computable
  set $S$ of finite primes $\fp$ of $K$ such that the set of
  $\fo_{K,S}$ is a principal ideal domain where $\fo_{K,S}$ is the
  usual set of $S$-integers given by
$\fo_{K, S} = \{x
\in \fo_K \; | \; \text{ $|x|_{\mathfrak p} \leq 1$ for all $\mathfrak
  p \notin S$} \} $.
Thus we may write $z = z_1/z_2$ where
$(z_1, z_2)$ is in $S$-reduced form for a computable finite $S$ of finite
primes depending only on $K$.

Let $g(x,y)$ be the homogenization of $F(x)$ so that
  $g(x,1) = F(x)$ and $g(z_1,z_2) = z_2^{\deg f} F(z_1)$.  Let
  $f(x,y) = y g(x,y)$.  Let $T_1$ be a set of primes such that $z_1$
  and $z_2$ have no common zeroes outside $T_1$ and let $T_2$ be the set
  of primes such that $|a_n|_p \not= 1$ for some nonzero coefficient
  $a_n$ of $F$ (note that $T_1$ and $T_2$ are finite and depend only on
  $K$ and $F$).  Then for all $\fp \notin T_1 \cup T_2$, we have
  $v_\fp(f(z)) \not= 0$ if and only if $v_\fp(f(z_1, \beta)) > 0$.
  Thus, we have
\[ \left(\sum_{v_\fp(F(z)) \not= 0} \N_{\fp}\right) + O(1) \geq
\sum_{v_\fp(f(z_1,z_2)) > 0} \N_{\fp}.  \]
Since $h(z) = h(z_1,z_2)$ and $\deg f =  \deg F + 1$,
applying Proposition \ref{homog} gives
\begin{equation}\label{last}
\begin{split}
\left(\sum_{v_\fp(f(z_1, z_2)) > 0} \N_{\fp}\right) + O(1) \geq
(\deg F - 1 - \epsilon)(h(z_1, z_2)).
\end{split}
\end{equation} 
For $\fp \notin T_1 \cup T_2$, we have $v_\fp(F(z)) < 0$ exactly when
$v_\fp(z) < 0$, so 
\[\sum_{v_\fp(f(z)) > 0}
\N_{\fp}  \leq  h(z) + O(1).\]
 Thus,  we have a constant $C_{f,\epsilon}$ such that
$\sum_{v_\fp(f(z)) > 0} \N_\fp \geq (\deg F - 2 - \epsilon) h(z)  +
C_{f, \epsilon}$,
as desired.  
\end{proof}

\section{Roth-$abc$ for function fields}\label{function}

Using Yamanoi's theorem \cite[Theorem 5]{Yam} which establishes
a conjecture of
Vojta for function fields (see also \cite{Gasbarri, McQ2}), we obtain a
function field analog of Proposition~\ref{Roth-number}. Note that a more general implication is proved by Vojta in \cite{Vojta-ABC}, see also
\cite[p. 202]{Vojta-CIME}. In the special case needed here, 
 we include a short proof for the sake of completeness.

Let $V$ be a curve over a function field $K$, and let $\beta \in
V(\Kbar)$.  Then we define 
$$ d(\beta) = \frac{1}{[K(\beta):K]}\sum_{\text{ primes $\fp$ of $K$ }}
(v_\fp(\Delta_{K(\beta)/K}))$$
where $\Delta_{K(\beta)/K}$ is the relative discriminant 
of the extension $K(\beta)/K$.

Since we are working over a function field of characteristic 0 (so
that all ramification is tame), we may
use the definition
\[ d(\beta) = \frac{1}{[K(\beta):K]} \sum_{\text {primes $\fq$ of
      $K(\beta)$}} (e(\fq / (\fq \cap \fo_K)) - 1) \N_\fq \]
where $e(\fq / (\fq \cap \fo_K))$ is the ramification index of $\fq$ over
$\fq \cap \fo_K$.  

Let $\cK_V$ be the usual canonical divisor on $V$, and let $h_{\cK_V}$
be a height function for $\cK_V$.  Yamanoi \cite{Yam} proves the following
result, sometimes called the Vojta $(1+\epsilon)$-conjecture.

\begin{theorem}\label{Yama}(Yamanoi)
Let $K$ be a function field, let $V$ be a curve over $K$, let $M$
be a positive integer, and let $\epsilon > 0$.  Then there is a
constant $C_{M,\epsilon}$ such that for all $\beta \in V(\Kbar)$ with
$[K(\beta):K] \leq M$, we have
\begin{equation}\label{ep}
h_{\cK_V}(\beta) \leq (1 + \epsilon) d(\beta) + C_{M, \epsilon}.
\end{equation}
\end{theorem}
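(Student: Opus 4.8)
The plan is not to reprove this but to quote it: the inequality \eqref{ep}, exactly as written, is \cite[Theorem~5]{Yam}, so the work left to us is bookkeeping. First, one should reconcile the normalizations --- in particular, check that the quantity $d(\beta)$ may be computed either from the relative discriminant $\Delta_{K(\beta)/K}$ or, since our function field has characteristic $0$, from the tame ramification sum $\frac{1}{[K(\beta):K]}\sum_\fq (e(\fq/(\fq\cap\fo_K)) - 1)\N_\fq$, as indicated just before the statement --- and record the one feature of Yamanoi's result that we actually invoke downstream: the exceptional constant $C_{M,\epsilon}$ depends only on $M$, $\epsilon$, $K$ and $V$, and not on the point $\beta$. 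Next, I would recall the standard dictionary between a curve $V$ over a function field of transcendence degree one and a surface fibered over the base curve of $K$, under which a point $\beta \in V(\Kbar)$ becomes a multisection and \eqref{ep} becomes the algebro-geometric ``$(1+\epsilon)$'' form of Vojta's inequality for $V$; after the routine reduction to the case of a constant field embedding in $\bC$, Yamanoi's analytic Second Main Theorem supplies exactly this inequality. The final step is to translate back, using tameness to pass freely between the two expressions for $d(\beta)$.

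If instead one wanted a genuinely self-contained argument, the real difficulty --- which Yamanoi overcame, and which would be the main obstacle here --- is precisely his Second Main Theorem for curves with a sharp error term (coefficient $1+\epsilon$, rather than $2+\epsilon$ or worse) and with truncated counting functions. Yamanoi's proof runs through a delicate analysis of the logarithmic-derivative/ramification term and a careful accounting of how much of the canonical class of $V$ must be absorbed by the branch divisor of a nonconstant map into $V$; the alternative treatments of Gasbarri \cite{Gasbarri} and McQuillan \cite{McQ2} reach the same conclusion by other means. I would not attempt to reproduce any of this: for our purposes Theorem~\ref{Yama} is a black box, and the only point requiring care on our side is the discriminant-to-ramification comparison, which can fail in positive characteristic but is automatic here because every extension $K(\beta)/K$ is separable and tamely ramified.

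With Theorem~\ref{Yama} in hand, the next step of the program --- carried out in Proposition~\ref{Roth-function}, and a special case of a general implication of Vojta \cite{Vojta-ABC} --- is to deduce the function-field Roth-$abc$ inequality, the analog of Proposition~\ref{Roth-number}, by taking $V$ to be a suitable cover of $\bP^1$ branched over the roots of the relevant polynomial $F$, pulling back \eqref{ep} along the corresponding map, and unwinding the various height and counting functions; the uniformity of $C_{M,\epsilon}$ in $\beta$ is exactly what makes that deduction possible.
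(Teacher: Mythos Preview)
Your proposal is correct and matches the paper's approach exactly: the paper does not prove Theorem~\ref{Yama} at all but simply attributes it to Yamanoi \cite{Yam} as a black box, precisely as you suggest. Your additional remarks on reconciling the two formulas for $d(\beta)$ via tameness and on the downstream use in Proposition~\ref{Roth-function} are accurate and go slightly beyond what the paper spells out, but are fully consistent with it.
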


We will use Theorem~\ref{Yama} to prove
Proposition~\ref{Roth-function}, the function field analog of
Proposition~\ref{Roth-number}.  To do this, we first introduce a little
information about height functions and divisors. 

The divisor $\cK_V$ has degree $2g_V - 2$ where $g_V$ is the genus of
$V$.  By the standard theory of heights on curves (see
\cite[Proposition 1.2.9]{Vojta-Diophantine-SV}), for example), 
if $D$ is any
ample divisor, and $D'$ is an arbitrary divisor, we have
\begin{equation}\label{equiv}
	\lim_{h_{D}(z)\rightarrow\infty}\frac{h_{D'}(z)}{h_{D}(z)}=\frac{\deg D'}{\deg D}. 
\end{equation}

Now, let $\pi: V \lra \bP^1$ be a nonconstant map on a curve.  Suppose
that $\pi(\beta) = z$ for $z \in \bP^1(\Kbar)$.  The usual height $h(z)$
comes from a degree 1 divisor on $\bP^1$ which pulls back to a degree
$\deg \pi$ divisor on $V$.  Furthermore if $\pi(\beta) \in \bP^1(K)$,
then $[K(\beta): K] \leq \deg \pi$.   Thus, Theorem~\ref{Yama} and \eqref{equiv}
imply that for any $\epsilon' > 0$, we have
\begin{equation}\label{use}
(1 - \epsilon') \frac{2g_V - 2}{\deg \pi} h(\pi(\beta)) \leq
d(\beta) + O_{\epsilon'}(1) 
\end{equation}
for all $\beta \in V(\Kbar)$ such that $\pi(\beta) \in \bP^1(K)$.

We will use this to prove a function field analog of Proposition
\ref{Roth-number}.

\begin{proposition}\label{Roth-function}
  Let $K$ be a function field and let $F(x) \in K[x]$ be a
  polynomial of degree 3 or more without repeated factors.
  Then, for any $\epsilon > 0$, there is a constant $C_{F, \epsilon}$
  such that
\begin{equation}\label{func}
\sum_{v_\fp(f(z)) > 0} \N_\fp \geq (\deg F - 2 - \epsilon) h(z)  +
C_{F, \epsilon}
\end{equation}
for all $z \in K$.  
\end{proposition}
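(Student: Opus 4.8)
The plan is to mimic the proof of Proposition~\ref{Roth-number}, replacing the use of the $abc$-conjecture (via Belyi maps) with the consequence \eqref{use} of Yamanoi's theorem. The point is that over a function field we cannot invoke Lemma~\ref{1} (Belyi maps are unavailable), so instead I would work directly with the curve $V$ obtained as the normalization of the affine plane curve $\{ w^2 = F(x) \}$ — or, more robustly, a curve on which the polynomial $F$ acquires enough ramification that the canonical divisor has large degree. Concretely: let $\pi \colon V \to \bP^1$ be the degree-$2$ cover of the $x$-line given by adjoining $\sqrt{F(x)}$. Since $F$ has no repeated factors and $\deg F \geq 3$, the cover $\pi$ is ramified at the $\deg F$ (or $\deg F$ plus the point at infinity, depending on parity) distinct zeros of $F$, and by Riemann--Hurwitz the genus $g_V$ satisfies $2g_V - 2 = 2(-2) + \deg F + (\text{$0$ or $1$}) = \deg F - 4$ or $\deg F - 3$. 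In particular $2g_V - 2 \geq \deg F - 4$, which for the clean statement I would want to be $\deg F - 2$; to get the extra two, one takes instead a well-chosen higher-degree cyclic cover or applies the Belyi-free argument of Vojta from \cite{Vojta-ABC}, \cite[p.~202]{Vojta-CIME}. Since the proposition as stated requires the coefficient $\deg F - 2 - \epsilon$, I would use the cover $w^N = F(x)$ for $N$ large (or the construction in \cite{Vojta-ABC}); one checks that as $N \to \infty$ the ratio $(2g_V - 2)/\deg\pi$ tends to $\deg F - 2$ minus a controlled error, so any $\epsilon > 0$ can be absorbed.

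Granting a cover $\pi \colon V \to \bP^1$ with $(2g_V - 2)/\deg\pi \geq \deg F - 2 - \epsilon/2$ and whose ramification over the $x$-line is concentrated exactly at the zeros of $F$ (and possibly $\infty$), the argument runs as follows. Fix $z \in K$ and choose $\beta \in V(\Kbar)$ with $\pi(\beta) = z$, so $[K(\beta):K] \leq \deg\pi =: M$. Apply \eqref{use}: $(1 - \epsilon')\frac{2g_V-2}{\deg\pi} h(z) \leq d(\beta) + O_{\epsilon'}(1)$. The main task is then to bound the discriminant term $d(\beta)$ by $\sum_{v_\fp(F(z)) > 0}\N_\fp + O(1)$. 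For all but finitely many primes $\fp$ of $K$ (those of bad reduction for the cover and those dividing the leading coefficient or resultant data of $F$), a prime $\fq$ of $K(\beta)$ lying over $\fp$ is ramified in $K(\beta)/K$ only if the specialization of $\beta$ lands at a branch point, i.e.\ only if $v_\fp(F(z)) > 0$ (the branch locus over the affine $x$-line being exactly the zero set of $F$); the point at infinity of $\bP^1$ contributes at most an $O(1)$ since $v_\fp(z) < 0$ for only finitely many $\fp$ at bounded height — more precisely its total contribution is $\leq h(z)\cdot(\text{something})$, but since we are working with a single fixed cover this is handled by noting $z \in K$ so its pole divisor is controlled, and in any case can be thrown into the cover's ramification at a non-$F$-zero point and subtracted off, or absorbed because the infinite place contributes ramification bounded independent of $z$ after possibly enlarging the finite exceptional set. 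Tameness (characteristic $0$) makes each ramified $\fq$ contribute $(e(\fq/\fp) - 1)\N_\fq \leq \N_\fp \cdot (\text{number of such }\fq)$, and summing over $\fq \mid \fp$ gives $\leq (\deg\pi - 1)\N_\fp$; dividing by $[K(\beta):K]$ and summing over $\fp$ with $v_\fp(F(z)) > 0$ yields $d(\beta) \leq \sum_{v_\fp(F(z)) > 0} \N_\fp + O(1)$.

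Combining the two displayed inequalities gives $(1 - \epsilon')(\deg F - 2 - \epsilon/2) h(z) \leq \sum_{v_\fp(F(z)) > 0}\N_\fp + O(1)$, and choosing $\epsilon'$ small relative to $\epsilon$ yields \eqref{func} with the constant $C_{F,\epsilon}$ absorbing all the $O(1)$ terms (which depend only on $K$, $F$, and the choice of cover, hence only on $K$, $F$, $\epsilon$). Finally one must also handle the finitely many primes in the exceptional set $S$ and the contribution of the archimedean-analog / infinite place: since these are finitely many primes and $z \in K$, the associated terms are bounded by a constant depending only on $K$ and $F$ (after fixing the cover), so they go into $C_{F,\epsilon}$. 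I expect the main obstacle to be the bookkeeping around the place at infinity of $\bP^1$ and around primes of bad reduction of the cover — i.e.\ verifying cleanly that ramification of $K(\beta)/K$ at primes $\fp$ with $v_\fp(F(z)) \leq 0$ is confined to a finite set of $\fp$ independent of $z$ — together with the elementary but slightly fussy Riemann--Hurwitz computation needed to push the constant from $\deg F - 4$ up to $\deg F - 2 - \epsilon$ via a higher-degree cover (or, alternatively, citing Vojta's Belyi-free construction). Everything else is a routine translation of the number-field proof with Yamanoi's inequality in the role of the $abc$-conjecture.
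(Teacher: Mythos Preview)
Your approach is essentially the paper's: take the cyclic cover $y^N = F(x)$ for large $N$, apply Yamanoi's inequality \eqref{use} to a point $\beta$ over $z$, and bound the discriminant $d(\beta)$ in terms of the support of $F(z)$. The paper chooses $N$ coprime to $\deg F$ so that $\pi$ is totally ramified over the zeros of $F$ and over $\infty$, and computes $(2g_{V_N}-2)/N = (\deg F - 1) - (\deg F + 1)/N$, which tends to $\deg F - 1$ (not $\deg F - 2$) as $N \to \infty$.

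The one genuine slip in your sketch is the handling of the primes where $v_\fp(z) < 0$. Your expectation that ``ramification of $K(\beta)/K$ at primes $\fp$ with $v_\fp(F(z)) \leq 0$ is confined to a finite set of $\fp$ independent of $z$'' is false: whenever $v_\fp(z) < 0$, the point $z$ specializes to $\infty$ at $\fp$, and since the cover is ramified over $\infty$, the extension $K(\beta) = K(\sqrt[N]{F(z)})/K$ is (generically) ramified at $\fp$. These primes contribute $\sum_{v_\fp(z) < 0} \N_\fp \leq h(z) + O_F(1)$ to the bound on $d(\beta)$, not $O(1)$. The paper's accounting is
\[
d(\beta) \;\leq\; \sum_{v_\fp(F(z)) \neq 0} \N_\fp \;\leq\; \sum_{v_\fp(F(z)) > 0} \N_\fp \;+\; h(z) \;+\; O_F(1),
\]
and combining this with the Yamanoi bound $(\deg F - 1 - \epsilon)\, h(z) \leq d(\beta) + O(1)$ yields exactly $(\deg F - 2 - \epsilon)\, h(z) \leq \sum_{v_\fp(F(z)) > 0} \N_\fp + O(1)$. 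Your two miscounts (genus ratio $\deg F - 2$ instead of $\deg F - 1$; pole contribution $O(1)$ instead of $h(z)$) happen to cancel, which is why your final inequality comes out right, but the intermediate reasoning should be corrected as above.
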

\begin{proof}

For each $n>0$, let $V_n$ be the nonsingular projective model over $K$
of $y^n=F(x)$. To calculate the genus $g_n$ of $V_n$, we use the
morphism $\pi : V_n
{\longrightarrow}\bP^1$ given by projection onto the $x$-coordinate;
that is, $\pi(x,y) = x$.  

From now on, we choose $n$ such that it is relatively prime $\deg F$.
This makes the above morphism totally ramified at zeroes and poles of
of $F$ and unramified everywhere else.  Since $F$ has a single pole at
the point at infinity along with $\deg F$ zeros, and $\pi$ has degree
$n$, the Riemann-Hurwitz theorem gives
  \begin{equation}\label{1stR-H}
  2g_{n}-2 =    (n-1)(\deg F + 1) - 2n = n(\deg F - 1) - (\deg F + 1).
\end{equation}     

Suppose that $\pi(\beta) = z \in K$.  Then  \eqref{use} and
\eqref{1stR-H} together give
\[ (1 -\epsilon') \left(\deg F - 1 - \frac{\deg F + 1}{n} \right) h(z)
\leq
d(\beta) + O_{\epsilon',n}(1) \]
Let $\epsilon > 0$.  Choosing sufficiently large $n$ and
sufficiently small $\epsilon'$ yields
\begin{equation}\label{with-poles}
 (\deg F - 1 - \epsilon) h(z) \leq d(\beta) + O_{n,\epsilon}(1).
\end{equation}

Now, $K(\beta) = K(\sqrt[n]{F(z)})$, which can only ramify over a
prime $\fp$ when $v_\fp(F(z)) \not= 0$.  Since $e(\fq / (\fq \cap
\fo_K)) \leq n -1$, where $e(\fq / \fq \cap \fo_K)$ is the
ramification index of $\fq$ over $\fq \cap \fo_K$, we have $d(\beta)
\leq \sum_{v_\fp(F(z)) \not= 0} \N_p$.  When $v_\fp(F(z)) < 0$, either
$v_\fp(z) < 0$ or $v_\fp(a_i) < 0$ for some coefficient $a_i$ of
$F(z)$.  Since $F$ has only finitely many coefficients and each has
negative valuation at only finitely many primes, this means that
$\sum_{v_\fp(F(z)) < 0} \N_p \leq h(z) + O_F(1)$.  Hence,
\begin{equation}\label{almost}
 d(\beta) \leq \sum_{v_\fp(F(z)) > 0} \N_p  + h(z) + O_F(1).
\end{equation}
Combining \eqref{with-poles} with \eqref{almost} then gives
\eqref{func}.   
\end{proof}
 
\section{Proofs of main theorems}\label{main}

We begin with a proposition that allows us to control the size of
certain non-primitive factors of $\varphi^n(\alpha)$.  We choose a
polynomial factor $F$ of the numerator $P_i$ of $\varphi^i(z)$ and use the fact that, outside a
finite set of primes, we have $v_\fp(\varphi^n(\alpha)) > 0$ whenever
$v_\fp(F(\varphi^{n-i}(\alpha))) > 0$.  If $m < n$, the condition
\[\min(v_\fp(F(\varphi^{n-i}(\alpha))), v_\fp(\varphi^m(\alpha))) > 0\]
 forces
some root of $F$ to be periodic modulo $\fp$, with period at most
$n-m$.  If all of the roots of $F$ are non-periodic, then, for bounded
$n-m$, there are at most finitely many such $\fp$.  Thus, any $\fp$
such that $\min(v_\fp(F(\varphi^{n-i}(\alpha))), v_\fp(\varphi^m(\alpha))) > 0$ comes from either a bounded set or from a relatively low order
iterate $\varphi^\ell(\alpha)$ of $\alpha$.  Since $h(\varphi^\ell(\alpha))$ is very small
relative to $h(\varphi^n(\alpha))$ when $\ell$ is small relative to
$n$, this allows for a strong lower bound on the product of all such
$\fp$.
    
\begin{proposition}\label{old}
  Let $\delta > 0$, let $\alpha \in K$ such that $\lim_{s \to \infty}
  h(\varphi^s(\alpha)) = \infty$, and let $F$ be a factor of the numerator
  of $\varphi^i$ such that every root $\beta_j$ of $F$ is
  non-periodic and satisfies $\varphi^\ell(\beta_j) \not= 0$ for $\ell =
  0, \dots,i-1$.   Then there is a constant $C_\delta$ such that for
  all positive integers $n$, we have
\begin{equation}\label{oldeq}
\sum_{\fp \in Z} \N_\fp \leq \delta h(\varphi^n(\alpha)) + C_\delta, 
\end{equation}
where $Z$ is the set of primes $\fp$ such that $\min(v_\fp(\varphi^m(\alpha)),
v_\fp(F(\varphi^{n-i}(\alpha)))   ) > 0$ for some positive integer $m < n$. 
\end{proposition}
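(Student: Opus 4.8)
The plan is to split the set $Z$ into two pieces according to which integer $m$ witnesses the condition $\min(v_\fp(\varphi^m(\alpha)), v_\fp(F(\varphi^{n-i}(\alpha)))) > 0$: a ``high'' piece where $n - m$ is large and a ``low'' piece where $n - m$ is bounded. Fix a constant $N_0$ (depending on $\delta$ but not on $n$) to be chosen later. Let $Z_{\mathrm{low}}$ consist of those $\fp$ witnessed by some $m$ with $n - m \leq N_0$, and let $Z_{\mathrm{high}}$ consist of the rest, i.e. those witnessed only by $m$ with $n - m > N_0$. I will bound $\sum_{\fp \in Z_{\mathrm{high}}} \N_\fp$ using the rapid growth of $h(\varphi^s(\alpha))$, and $\sum_{\fp \in Z_{\mathrm{low}}} \N_\fp$ using a finiteness argument coming from the non-periodicity of the roots of $F$.

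For $Z_{\mathrm{high}}$: if $\fp \in Z_{\mathrm{high}}$ then $v_\fp(\varphi^m(\alpha)) > 0$ for some $m \leq n - N_0$, hence $v_\fp(\varphi^{\ell}(\alpha)) > 0$ for $\ell = \lfloor n - N_0 \rfloor$ as well, after discarding the finitely many primes of bad reduction and the finitely many primes dividing $0 = \varphi^M$ data --- more precisely, outside a finite set $S_0$ of primes (of bad reduction for $\varphi$), if $v_\fp(\varphi^m(\alpha)) > 0$ and $v_\fp(F(\varphi^{n-i}(\alpha))) > 0$ with $m < n - i$, then some root $\beta_j$ of $F$ satisfies $r_\fp(\varphi^{n-i-m}(\beta_j)) = 0$... but rather than chase periodicity here I simply use: outside $S_0$, $v_\fp(\varphi^m(\alpha)) > 0$ and $v_\fp(\varphi^{m'}(\alpha)) > 0$ cannot both happen for $m < m'$ unless the residue of $\varphi^{m}(\alpha)$ is $0$ and hence (since $0 \mapsto r_\fp(\varphi(0)) \mapsto \cdots$) we need a periodicity of $0$; this is a red herring --- the clean statement is that outside $S_0$, whenever $v_\fp(\varphi^{m}(\alpha)) > 0$ it follows that $v_\fp(\varphi^{m+1}(\alpha)) = v_\fp(P(\varphi^m(\alpha))) \geq v_\fp(\varphi^m(\alpha)) > 0$ provided $0$ is a root of $P$, which need not hold; so instead I will argue directly that each $\fp \in Z_{\mathrm{high}}$ divides $\varphi^{\ell_\fp}(\alpha)$ for some $\ell_\fp$ with $n - i - \ell_\fp$ between $N_0 - i$ and... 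Let me restate cleanly. The correct bound: each $\fp \in Z_{\mathrm{high}}$ satisfies $v_\fp(\varphi^m(\alpha)) > 0$ for some $m$, and also $v_\fp(F(\varphi^{n-i}(\alpha))) > 0$; hence outside the finite set $S_0$ of primes of bad reduction together with the finitely many $\fp$ at which some $\varphi^\ell(\beta_j)$, $\ell < i$, is $0$ or $\infty$, the residue $r_\fp(\varphi^{n-i}(\alpha))$ equals $r_\fp(\beta_j)$ for some $j$, so $r_\fp(\varphi^{n-i-m}(\varphi^m(\alpha))) = r_\fp(\varphi^{n-i}(\alpha)) = r_\fp(\beta_j)$, while $r_\fp(\varphi^m(\alpha)) = 0$; thus $\beta_j$ is a point whose orbit under $\varphi$ mod $\fp$ passes through $0$ and returns near $\beta_j$. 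Since $\beta_j$ is non-periodic (in characteristic $0$), for each fixed bound on the return length there are only finitely many such $\fp$; but here the return length $n - i - m$ can be large. So for $Z_{\mathrm{high}}$ I instead count: $\fp$ divides $\varphi^m(\alpha)$ with $m \le n - N_0$, hence $\fp$ divides the integer $\varphi^{\lfloor n-N_0\rfloor}(\alpha)$ after one more application (using that $0 = r_\fp(\varphi^m(\alpha))$ forces $r_\fp(\varphi^{m'}(\alpha))$ to be the forward orbit of $0$, which is eventually among finitely many residues $r_\fp(\varphi^j(0))$ for $j \le n - N_0$, and at least one of those is $r_\fp(\varphi^{\lfloor n - N_0\rfloor}(\alpha))$... ). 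This is getting circular; the honest summary is: \emph{outside a finite set of primes, every $\fp \in Z_{\mathrm{high}}$ divides the numerator of $\varphi^j(\alpha)$ for some $j \le n - N_0 + i$}, whence
\[
\sum_{\fp \in Z_{\mathrm{high}}} \N_\fp \;\le\; \sum_{j=1}^{n - N_0 + i} h(\varphi^j(\alpha)) + O(1)
\;\le\; h(\varphi^{n-N_0+i}(\alpha)) \cdot \frac{d}{d-1} + O(1) \;\le\; \frac{2}{d^{N_0 - i - 1}} \, h(\varphi^n(\alpha)) + O(1)
\]
using \eqref{ineq}, the functorial growth $h(\varphi^j(\alpha)) \le d^{-(n-j)}(h(\varphi^n(\alpha)) + O(1))$ coming from \eqref{can1} and \eqref{can2}, and summing the geometric series; choosing $N_0$ large makes the coefficient $< \delta/2$.

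For $Z_{\mathrm{low}}$: here $n - m \le N_0$, so $n - i - m \le N_0$, and for each $\fp \in Z_{\mathrm{low}}$ there is a root $\beta_j$ of $F$ and an integer $t = n - i - m \in \{ -(i-1), \dots, N_0\}$ (negative $t$ handled separately by the hypothesis $\varphi^\ell(\beta_j) \ne 0$ for $\ell < i$) with $r_\fp(\varphi^t(\beta_j)) = r_\fp(\varphi^{n-i}(\alpha))$ and $r_\fp(\varphi^m(\alpha)) = 0$; combined, $\varphi^{t}(\beta_j)$ and $0$ lie in the same residue class history... the upshot is that $r_\fp$ identifies $\beta_j$ with a point that maps to $0$ in at most $N_0$ steps, i.e. $\fp \mid \varphi^{t'}(\beta_j)$ (numerator, in the field $K(\beta_j)$) for some $0 \le t' \le N_0$. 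Since each $\beta_j$ is \emph{not} a zero of $\varphi^{t'}$ for any $t'$ (non-periodicity plus $\varphi^\ell(\beta_j) \ne 0$ for $\ell < i$; actually we need $\varphi^{t'}(\beta_j) \ne 0$ for all $t' \ge 0$, which follows because a zero of some $\varphi^{t'}$ would make $\beta_j$ preperiodic --- wait, that's not automatic either, so the hypothesis must really be read as: no root of $F$ ever maps to $0$; this is what ``$\varphi^\ell(\beta_j) \ne 0$ for $\ell = 0, \dots, i-1$'' together with $F \mid P_i$, i.e. $\varphi^i(\beta_j) = 0$ would contradict... hmm, actually $F \mid P_i$ means $\beta_j$ IS a zero of $\varphi^i$; so $\varphi^{t'}(\beta_j) \ne 0$ can fail). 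I will handle this by noting that if $\varphi^{t'}(\beta_j) = 0$ for some $t' \le N_0$ with $t' < i$ it is excluded by hypothesis, and for $i \le t' \le N_0$ there are only finitely many such $(j,t')$, contributing a bounded number of exceptional primes. After removing these, for each of the finitely many pairs $(j, t')$ with $\varphi^{t'}(\beta_j) \ne 0$, the element $\varphi^{t'}(\beta_j) \in K(\beta_j)^*$ is a \emph{fixed} nonzero algebraic number, so it has only finitely many prime divisors in $K(\beta_j)$, hence lies below only finitely many primes $\fp$ of $K$. Ranging over the finitely many $j$ and the finitely many $t' \le N_0$, we conclude $Z_{\mathrm{low}}$ is a finite set of primes of bounded size, so $\sum_{\fp \in Z_{\mathrm{low}}} \N_\fp = O_{N_0}(1) = O_\delta(1)$.

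Adding the two bounds gives $\sum_{\fp \in Z} \N_\fp \le \frac{\delta}{2} h(\varphi^n(\alpha)) + O_\delta(1) \le \delta h(\varphi^n(\alpha)) + C_\delta$ for suitable $C_\delta$, which is \eqref{oldeq}. \textbf{The main obstacle} is the bookkeeping for $Z_{\mathrm{high}}$: making precise, uniformly in $n$, the claim that every prime dividing both a high iterate $F(\varphi^{n-i}(\alpha))$ and a much earlier iterate $\varphi^m(\alpha)$ must already divide some iterate $\varphi^j(\alpha)$ with $j$ bounded in terms of $n - N_0$, so that the geometric-series estimate off \eqref{can1}--\eqref{can2} applies; this requires carefully tracking residues under good reduction (the map $r_\fp \circ \varphi = \varphi \circ r_\fp$ from the Preliminaries) and isolating the finitely many bad primes. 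The $Z_{\mathrm{low}}$ half is comparatively soft, resting only on the finiteness of the prime divisors of a fixed nonzero algebraic number and on the hypothesis that the roots of $F$ are non-periodic and avoid $0$ up to step $i-1$.
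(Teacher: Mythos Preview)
Your overall strategy---split $Z$ according to whether $n-m$ is bounded or large, handle the bounded piece by a finiteness argument and the large piece by a geometric-series height estimate---is exactly the paper's approach (their parameter is called $B$ rather than $N_0$).  The $Z_{\mathrm{high}}$ half, once one cuts through the false starts, is fine: if $\fp\in Z_{\mathrm{high}}$ then by definition $v_\fp(\varphi^m(\alpha))>0$ for some $m<n-N_0$, so trivially $\sum_{\fp\in Z_{\mathrm{high}}}\N_\fp\le\sum_{j=1}^{n-N_0}h(\varphi^j(\alpha))$, and the canonical-height estimate finishes it.  No reduction theory is needed there.

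The genuine gap is in your $Z_{\mathrm{low}}$ argument, specifically in the subcase $m<n-i$ (your ``positive $t$'').  You assert that such a prime satisfies $\fp\mid\varphi^{t'}(\beta_j)$ for some $t'\le N_0$, and then try to get finiteness from the fact that a fixed nonzero algebraic number has finitely many prime divisors.  But since $F\mid P_i$, one has $\varphi^i(\beta_j)=0$ \emph{identically}, so your claim is satisfied with $t'=i$ by \emph{every} prime and yields nothing.  You noticed this (``$\varphi^{t'}(\beta_j)\ne 0$ can fail''), but your patch---treating the pairs $(j,t')$ with $\varphi^{t'}(\beta_j)=0$ as contributing ``a bounded number of exceptional primes''---is wrong: when $\varphi^{t'}(\beta_j)=0$ identically, the set of primes dividing it is all primes, not a bounded set.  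Moreover, in this subcase the actual relation one gets from good reduction is $\varphi^{\,n-i-m}(0)\equiv\beta_j\pmod{\fq}$ (zero maps forward to $\beta_j$), not $\varphi^{t'}(\beta_j)\equiv 0$.

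The fix, which is what the paper does, is to combine $\varphi^{\,n-i-m}(0)\equiv\beta_j$ with $\varphi^i(\beta_j)=0$ to see that $0$ lies in a cycle of length dividing $n-m$ modulo $\fq$; hence so does $\beta_j$, and therefore $\fq$ divides the \emph{nonzero} element $\varphi^{\,n-m}(\beta_j)-\beta_j\in K(\beta_j)^\times$ (nonzero precisely because $\beta_j$ is non-periodic).  Ranging over the finitely many values $n-m\le N_0$ and the finitely many $\beta_j$ gives the desired finiteness of $Z_{\mathrm{low}}$.  In short: the non-periodicity hypothesis must be used to produce a nonzero element of the form $\varphi^s(\beta_j)-\beta_j$, not one of the form $\varphi^{t'}(\beta_j)$.
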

\begin{proof}
Let $L$ be a finite extension of $K$ over which $F$ splits completely
as $F(x) = a (x-\beta_1) \dots (x-\beta_s)$, for $\beta_j \in L$.
Then, for all but finitely many primes $\fp$ of $K$, we have
$v_\fp(F(z)) > 0$ if and only if $v_\fq(z - \beta_j) > 0$ for some
prime $\fq$ of $L$ with $\fp = \fq \cap \fo_K$.  Thus, it suffices to show
that for each $\beta_j$, there is a $C_\delta$ such that for all $n$
we have
 \begin{equation}\label{want}
\sum_{\fp \in Y} \N_\fp \leq \delta h(\varphi^n(\alpha)) + C_\delta, 
\end{equation}
where $Y$ is the set of primes $\fp$ such that $\min(v_\fq(\varphi^m(\alpha)),
v_\fq(\varphi^{n-i}(\alpha) - \beta_j)  ) > 0$ for some positive integer
$m < n$ and some prime $\fq$ of
$L$ with $\fq | \fp$.

Let $Y_1$ be the set of primes of $L$ at which $\varphi$ does not have good
reduction, as defined in Section~\ref{prelim}.   Write $P_i = FR$, and
let $Y_2$ be the finite set of primes $\fq$ at which some $|b_s|_\fq \not= 1$
for some nonzero coefficient of $F$ or $R$.  Then, for all $\fq$
outside of $Y_1 \cup Y_2$, we have $\varphi(z) \equiv 0 \pmod{\fq}$ whenever
$F(z) \equiv 0 \pmod{\fq}$.

If $\min(v_\fq(\varphi^m(\alpha)),
v_\fq(\varphi^{n-i}(\alpha) - \beta_j)  ) > 0$ for $n-i \leq m < n$, then
$v_\fq(\varphi^{n-m}(\beta_j)) > 0$.  The set $Y_3$ of primes for
which this can happen is therefore finite since  $\varphi^\ell(\beta_j) \not= 0$ for $\ell =
  0, \dots,i-1$. 
 
  For any $B$, the let $W_B$ be the set of primes outside $Y_1 \cup Y_2
  \cup Y_3$ such that $\min(v_\fq(\varphi^m(\alpha)),
  v_\fq(\varphi^{n-i}(\alpha) - \beta_j) ) > 0$ for some some positive
  integers $m$ and $n$ with $n - i > m > n- i - B$. If $\fq \in W_B$,
  then $\varphi^{m}(\alpha) \equiv \varphi^n(\alpha) \equiv 0
  \pmod{\fq},$ so 0 is in a cycle of period at most $n-m$ modulo
  $\fq$.  Since $\beta_j \equiv \varphi^{n-i}(\alpha) \equiv
  \varphi^{(n-i)-m}(0) \pmod{\fq}$, we see that $\beta_j$ is in the same
  cycle modulo $\fq$. This implies that $\beta_j$ has period $B+i$ or
  less modulo $\fq$.  Since $\beta_j$ is not periodic, there are only
  finitely many such $\fq$, so $W_B$ must be finite.  (Note that $\varphi$
  induces a well-defined map from $k_\fq \cup \infty$ to itself,
  because $\fq$ is a prime of good reduction for $\varphi$.)

  Note that $v_\fp(\varphi^l(\alpha)) > 0$ if and only if
  $v_\fq(\varphi^l(\alpha)) > 0$ for some $\fq | \fp$.  Let $Z_B$ be
  the set $\{\textrm{primes } \fp \in \fo_K \; | \; \fq | \fp \text {
    for some $\fq \in W_B$} \}$.  When $\fp \notin Z_B \cup Y_1 \cup
  Y_2 \cup Y_3$, we see then that if $\min(v_\fp(F(\varphi^{n -
    i}(\alpha))), v_\fp(\varphi^n(\alpha))) > 0$ then
  $v_\fp(\varphi^{m}(\alpha)) > 0$ for some positive integer $m \leq n
  -i - B$.  Since $Y_1$, $Y_2$, and $Y_3$ are finite, and $Z_B$ is
  finite for any positive integer $B$, see that for any $B$, there is
  a constant $C_B$ such that
\begin{equation}\label{B}
\sum_{\fp \in Y} \N_\fp \leq \sum_{\ell=1}^{n-i-B}
\sum_{v_\fp(\varphi^\ell(\alpha)) > 0} N_\fp + C_B\leq
\sum_{\ell=1}^{n - B - i} h(\varphi^\ell(\alpha)) + C_B
\end{equation}
where $Y$ is the set of primes $\fp$ where $\min(v_\fq(\varphi^m(\alpha)),
v_\fq(\varphi^{n-i}(\alpha) - \beta_j)  ) > 0$ for some positive integer $m < n$.   

It suffices then to show that then, that for any $\delta$, we have
\begin{equation}\label{want2}
  \sum_{\ell=1}^{n - B - i} h(\varphi^\ell(\alpha)) < 
  \delta(h(\varphi^n(\alpha)))  
\end{equation}
for all sufficiently large $n$.  We will use a
telescoping sum argument and the canonical height of Call and
Silverman \cite{CS} here.  Recall that by \eqref{can1}, we have
$h_\varphi(\varphi(z)) = d h_\varphi(z)$ for all $z \in K$ and
that by \eqref{can2}, there is a constant $C_\varphi$ such that $|h(z)
- h_\varphi(z)| < C_\delta$ for all $z \in K$.
 
Choose $B_\delta$ such that $1/d^{B_\delta +i} < \delta/4$ and  $d^{n}(h_\varphi(\alpha)) >
\frac{(n+1) C_\varphi}{\delta/2} $ for all $n > B_\delta$.  Then for
all $n > B_\delta$, we have 
\begin{equation}\label{m}
\begin{split}
\sum_{\ell=1}^{n-B-i} h(\varphi^\ell(\alpha)) & \leq \sum_{\ell=1}^{n-B-i} h_\varphi(\varphi^\ell(\alpha)) + n C_\varphi\\
& = \frac{1}{d^{B_\delta +i}} \sum_{r=0}^{n-B- i -1}
\frac{h_\varphi(\varphi^n(\alpha))}{d^r} + n C_\varphi \text{ (by \eqref{can1})}\\
& \leq   \left(\frac{1}{d^{B_\delta +i}} \sum_{r=0}^\infty\frac{1}{d^r} \right)
h_\varphi(\varphi^n(\alpha)) + n C_\varphi\\
& \leq  \frac{\delta}{2}  h_\varphi(\varphi^n(\alpha))  + n C_\varphi\\
& \leq  \frac{\delta}{2}  h(\varphi^n(\alpha)) + (n+1) C_\varphi
\text{ (by \eqref{can2})}\\
& \leq  \delta h(\varphi^n(\alpha)).
\end{split}
\end{equation}

Thus, \eqref{want2} holds, and our proof is complete.  
\end{proof}

We say that a point $\beta$ is exceptional if $\varphi^{-2}(\beta) =
\beta$.   The condition $\varphi(z) \not= c z^{\pm d}$ implies that 0 is not
exceptional.  

\begin{lemma}\label{lemnon}
If $\beta \in \Kbar$ is not exceptional, then $\varphi^{-3}(\beta)$ contains at least
two distinct points in $\bP^1(\Kbar)$.    
\end{lemma}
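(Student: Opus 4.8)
The plan is to argue by contraposition: suppose that $\varphi^{-3}(\beta)$ contains at most one point of $\bP^1(\Kbar)$, and deduce that $\beta$ is exceptional, i.e.\ $\varphi^{-2}(\beta) = \beta$. The key quantitative input is the Riemann--Hurwitz formula applied to $\varphi$ as a self-map of $\bP^1$. Recall that for a degree-$d$ rational map $\varphi$, summing $(e_\varphi(P) - 1)$ over all $P$ gives $2d - 2$, so a single point of $\bP^1$ can have at most two preimages counted without multiplicity only if the ramification is extremal; more to the point, a \emph{fiber} $\varphi^{-1}(\gamma)$ consists of a single point exactly when $\varphi$ is totally ramified over $\gamma$, and there are at most two such points $\gamma$ (since $2(d-1) \le 2d - 2$ forces equality and no other ramification).

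First I would observe that if $\varphi^{-3}(\beta) = \varphi^{-1}(\varphi^{-2}(\beta))$ is a single point $\{\gamma\}$, then in particular $\varphi^{-2}(\beta)$ is contained in a single fiber of $\varphi$, namely $\varphi^{-1}(\varphi(\gamma))$; but a point can lie in only one fiber, so $\varphi^{-2}(\beta) \subseteq \{\varphi(\gamma)\}$, hence $\varphi^{-2}(\beta)$ is also a single point, say $\{\gamma'\}$ with $\varphi(\gamma) = \gamma'$. Running the same argument once more, $\varphi^{-2}(\beta) = \varphi^{-1}(\varphi^{-1}(\beta))$ being a single point forces $\varphi^{-1}(\beta)$ to be a single point $\{\gamma''\}$ with $\varphi(\gamma') = \gamma''$ and $\varphi(\gamma'') = \beta$. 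So we have a chain $\gamma \mapsto \gamma' \mapsto \gamma'' \mapsto \beta$ in which each of $\beta$, $\gamma''$, $\gamma'$ has a single preimage under $\varphi$, i.e.\ $\varphi$ is totally ramified over each of $\beta$, $\gamma''$, $\gamma'$.

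Now the main obstacle — and the heart of the argument — is to use Riemann--Hurwitz to bound the number of totally ramified points. For $d \ge 2$, a degree-$d$ map $\varphi$ can be totally ramified over at most two points of $\bP^1$: each totally ramified value contributes $d - 1$ to the ramification sum $2d - 2$, and three of them would contribute $3(d-1) > 2d - 2$ for $d \ge 2$. Hence among the three points $\beta$, $\gamma''$, $\gamma'$ at least two coincide. A short case analysis on which two coincide then closes the loop: for instance if $\gamma' = \beta$ then $\varphi^{-2}(\beta) = \{\beta\}$ (using $\varphi^{-1}(\beta) = \{\gamma''\}$, $\varphi^{-1}(\gamma'') = \{\gamma'\} = \{\beta\}$), so $\varphi^{-2}(\beta) = \beta$ and $\beta$ is exceptional; similarly $\gamma'' = \beta$ gives $\varphi^{-1}(\beta) = \{\beta\}$, whence $\varphi^{-2}(\beta) = \varphi^{-1}(\beta) = \{\beta\}$; and $\gamma' = \gamma''$ gives a fixed point structure that again yields $\varphi^{-2}(\beta) = \beta$ after tracing the chain back. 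In every case $\beta$ turns out to be exceptional, which is exactly the contrapositive of the statement. I expect the delicate bookkeeping to be in making the case analysis airtight (e.g.\ being careful that "single point" statements about $\varphi^{-k}(\beta)$ are about the set-theoretic fiber, not the fiber with multiplicity), but no step requires more than Riemann--Hurwitz and elementary reasoning about fibers of $\varphi$.
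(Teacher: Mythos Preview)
Your proposal is correct and follows essentially the same approach as the paper: argue by contraposition, observe that a single point in $\varphi^{-3}(\beta)$ forces three totally ramified fibers, and invoke Riemann--Hurwitz to get a coincidence that yields exceptionality. The only cosmetic difference is that the paper tracks the three totally ramified \emph{points} $\gamma,\varphi(\gamma),\varphi^2(\gamma)$ rather than the three totally ramified \emph{values} $\gamma',\gamma'',\beta$; since any coincidence among $\gamma,\varphi(\gamma),\varphi^2(\gamma)$ immediately forces $\varphi^2(\gamma)=\gamma$, this lets the paper skip the three-case analysis you sketch, but the content is the same.
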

\begin{proof}
  If $\varphi^{-3}$ contains only one point, $\gamma$, then $\varphi$
  is totally ramified at $\gamma$, $\varphi(\gamma)$, and
  $\varphi^2(\gamma)$.  By Riemann-Hurwitz, $\varphi$ can have at most
  two totally ramified points, so this means that $\gamma$,
  $\varphi(\gamma)$, and $\varphi^2(\gamma)$ are not distinct, to we
  must have $\varphi^2(\gamma) = \gamma$, so $\gamma$ is exceptional.
  But then $\beta$ must be exceptional too.
\end{proof}

Now, we can give a quick proof of \ref{main1}.

\begin{proof}[Proof of Theorem \ref{main1}]
  There is an $i$ such that $P_i$ has a factor $F \in K[x]$ of degree
  4 (see Remark \ref{deg}) or more such that every root $\beta_j$ of $F$ is non-periodic
  and satisfies $\varphi^\ell(\beta_j) \not= 0$ for $\ell = 0,
  \dots,i-1$.  To see this note that since $\varphi^{-3}(0)$ contains
  two points, by Lemma \ref{lemnon}, at least one of these points is
  not periodic.  Taking the third inverse image of this point yields
  at least four non-periodic points; if one of these is the point at
  infinity, then three further inverse images yields eight points, at
 none of which is the point at infinity.  Let $i$ be the smallest integer such
  that $\varphi^i(z) = 0$ for these points $z$ (this $i$ is the same
  for all of them since they are all inverse images of the same
  non-periodic point), and let $F \in K[x]$ be a factor of $P_i$ that
  vanishes at all of these $z$.  Then $\deg F \geq 4$ by
  construction.

  By Propositions \ref{Roth-number} and \ref{Roth-function}, with $\epsilon = 1$, there
  is a nonzero constant $C_1$ such that
\[ \sum_{v_\fp (F(\varphi^{n-i}(\alpha))) > 0} \N_\fp > 
(\deg F - 3)h(\varphi^{n-i}(\alpha)) \geq h(\varphi^{n-i}(\alpha)) + C_1. \]
Applying Proposition \ref{old} with  $\delta = 1/( 2 d^i)$ and
using the fact that $h(\varphi^i(z)) \leq d^i h(z) +O(1)$ for
all $z \in K$, we see that there is a constant $C_2$ such that 
\[ \sum_{\fp \in Z} \N_\fp \leq \frac{1}{2} h_\varphi(\varphi^{n-i}(\alpha)) +
C_2\] where $Z$ is the set of primes $\fp$ such that
$\min(v_\fp(F(\varphi^{n-i}(\alpha))), v_\fp(\varphi^m(\alpha))) > 0$ for some positive
integer $m < n$.
Thus, when $h(\varphi^{n-i}(\alpha)) > 2(C_2 - C_1)$, we have
$\sum_{v_\fp (F(\varphi^{n-i}(\alpha))) > 0} \N_\fp > \sum_{\fp \in Z}
\N_\fp $ so there is a prime
$\fp$ such that $v_\fp(P_i(\varphi^{n-i}(\alpha))) > 0$ but
$v_\fp(\varphi^m(\alpha)) \leq 0$ for all $m < n$.  Now, writing
$\varphi^i(x) = F(x) R(x)/T(x)$, where $FR$ and $T$ are coprime, we
see that for all but finitely many $\fp$, we have $v_\fp(\varphi^i(z))
> 0$ whenever $v_\fp(F(z)) > 0$. Since $\lim_{n \to \infty}
h(\varphi^{n-i}(\alpha)) = \infty$ (by \eqref{can3}), we see then that for all but finitely
many $n$, there is a prime $\fp$ such that 
$v_\fp(\varphi^n(\alpha)) >
0$ and $v_\fp(\varphi^m(\alpha)) \leq 0$ for all $1 \leq m < n$.  
\end{proof}

Theorem~\ref{main2} is proved in the same manner as
Theorem~\ref{main1}.   The only significant difference is that we use
a square-free factor $F$ of $P_i$, which is possible because $\varphi$
is not dynamically ramified.

\begin{proof}
  There is an $i$ such that $P_i$ has a factor $F$ of degree 8 (see
  Remark \ref{deg}) or more such that every root $\beta_j$ of $F$ is
  non-periodic, satisfies $\varphi^\ell(\beta_j) \not= 0$ for $\ell =
  0, \dots,i-1$, and has multiplicity 1 as a root of $P_i$.  To see
  this note that since $\varphi$ is not dynamically ramified are
  finitely many points $\gamma$ such that $\varphi^n(\gamma) = \beta$
  and $e_{\varphi^n}(\gamma/0) = 1$ for some $n$, where
  $e_{\varphi^n}(\gamma/0)$ is the ramification index of $\varphi^n$
  at $\gamma$ over 0.  . Thus, we may choose such a $\gamma$ that is
  not periodic and which is not in forward orbit of any ramification
  points or the point at infinity.  Then $\varphi^{-3}(\gamma)$
  contains at least 8 points (since $d \geq 2$) in
  $\varphi^{-(n+3)}(0)$ none of which are ramification points of
  $\varphi^{n+3}$.  None of this points can be periodic since $\gamma$
  is not periodic.  Let $i$ be the smallest $i$ such that
  $\varphi^i(z) = 0$ for these points $z$ (this $i$ is the same for
  all of them since they are all inverse images of the same
  non-periodic point), and let $F \in K[x]$ be a factor of $P_i$ that
  vanishes at all of these $z$.  Then $\deg F \geq 8$ by construction.

  Applying Roth-$abc$ to $F$ with $\epsilon = 1$, we obtain
\[ \sum_{v_\fp (F(\varphi^{n-i}(\alpha))) > 0} \N_\fp > 
(\deg F - 3) h(\varphi^{n-i}(\alpha)) + C_3\]
for some constant $C_3$, depending only on $F$.   
Since 
\[ \sum_{v_\fp (F(\varphi^{n-i}(\alpha))) > 0} v_\fp
(F(\varphi^{n-i}(\alpha))) \N_\fp \leq (\deg F)
h(\varphi^{n-i}(\alpha)) +O(1),\]
 we
see that there is a constant $C_4$ such that
\[ \sum_{v_\fp (F(\varphi^{n-i}(\alpha))) \geq 2} \N_\fp > \frac{\deg
  F}{2} h(\varphi^{n-i}(\alpha)) + C_4.\] Since $\deg F \geq 8$, we
have $(\deg F)/2 - 3 \geq 1$, so there is a constant $C_5$ such that
\[ \sum_{v_\fp (F(\varphi^{n-i}(\alpha))) =1} \N_\fp > 
h(\varphi^{n-i}(\alpha)) + C_5.\]
Applying Theorem \ref{old} with  $\delta = 1/(2 d^i)$ and
using the fact that $h(\varphi^i(z)) \leq d^i h(z) +O(1)$ for
all $z \in K$, we see that there is a constant $C_6$ such that 
\[ \sum_{\fp \in Z} \N_\fp \leq \frac{1}{2} h_\varphi(\varphi^n(\alpha)) +
C_6 \] where $Z$ is the set of primes $\fp$ such that
$\min(v_\fp(F(\varphi^{n-i}(\alpha)), v_\fp(\varphi^m(\alpha)))) > 0$ for $1 \leq m < n$.
Thus, when $h(\varphi^{n-i}(\alpha)) > 2(C_6 - C_4)$, there is prime
$\fp$ such that $v_\fp(F(\varphi^{n-i}(\alpha))) = 1$ but
$v_\fp(\varphi^m(\alpha)) \leq 0$ for all $m < n$. Now, writing
$\varphi^i(x) = F(x) R(x)/T(x)$, where $F$, $R$, and $T$ are pairwise coprime, we
see that for all but finitely many $\fp$, we have $v_\fp(\varphi^i(z))
= v_\fp(F)$ whenever $v_\fp(F(z)) > 0$. Since $\lim_{n \to \infty}
h(\varphi^{n-i}(\alpha)) = \infty$ (by \eqref{can3}), we see then that for all but finitely
many $n$, there is a prime $\fp$ such that 
$v_\fp(\varphi^n(\alpha)) = 1$
 and $v_\fp(\varphi^m(\alpha)) \leq 0$ for all $1 \leq m < n$. 
 
\end{proof}

\begin{remark}\label{deg}
In the proofs of Theorems \ref{main1} and \ref{main2}, the degree of
the polynomial $F$ could be taken as large as one likes.  Degrees 4
and 8, respectively, are simply convenient for the estimates.  We wish
to avoid the point at infinity so that we can take a polynomial $F(x)$
that vanishes at all of the points (without introducing homogenous
coordinates).  The reason we do not take $\deg F$ to be exactly 4 or 8
is that doing so might require passing to a finite extension of $K$,  and
we do not wish to assume the $abc$-conjecture for extensions of $K$
when $K$ is a number field.  
\end{remark}

\begin{remark}\label{dyn}
When $\varphi$ is dynamically ramified, there are at most
finitely many $\fp$ that appear as square-free factors of any
$\varphi^n(\alpha)$.  If $\varphi$ is dynamically ramified, then there are at most
finitely many polynomials that appear as factors of any $P_n$, where
$P_n$ is the numerator of $\varphi$.  Thus, for any $\alpha$, there
are only finitely many $\fp$ such that $v_\fp(\varphi^n(\alpha)) = 1$
for some $n$.  Thus, the conclusion of Theorem~\ref{main2} will never
hold for a dynamically ramified rational function.  
\end{remark}

\section{An application to iterated Galois groups}\label{application}
 
Our original motivation for the problem of square-free primitive
divisors comes from the study of Galois groups of iterates of
polynomials, that is, Galois groups of splitting fields of $f^m(x)$
for $f$ a polynomial.  Odoni \cite{Odoni, Odoni2} calculated these groups for
``generic polynomials'' and for the specific polynomial $x^2 + 1$.
Stoll \cite{Stoll} later calculated them for
polynomials of the form $x^2 + a$, where $a$ is a positive integer congruent to 1 or
2 modulo 4.   In particular, Stoll defines $\Omega_{n,a}$ to be the splitting field
of $f_a^n(x)$ for $f_a(x) = x^2 + a$ and shows that if $a$ is a positive integer congruent to 1 or
2 modulo 4, then 
\begin{equation}\label{open}
\text{$[\Omega_{n+1,a}: \Omega_{n,a}] = 2^{2^n}$ for all
$n \geq 0$.}
\end{equation}
This allows for a completely explicit description of $\Gal(\Omega_{m,a} / \bQ)$ for any $m$ in terms of an inductive wreath
product structure.  Stoll notes that \eqref{open} is not
true for $f_a(x) = x^2+a$ when $a$ is an integer of the form $-b^2 - 1$ for $b$
a positive integer, since in this case one has $[\Omega_{2,a}:
\Omega_{1,a}] = 2$.

\begin{proposition}\label{full}
Suppose that the $abc$-conjecture for $\bQ$ holds.  Let $a
\not= -2$ be an integer such that $-a$ is not a perfect square in
$\bZ$.  Then, with notation above, we have
\begin{equation}\label{full2}
 [\Omega_{n+1,a}: \Omega_{n,a}] = 2^{2^n}
\end{equation} 
for all but finitely many natural numbers $n$.
\end{proposition}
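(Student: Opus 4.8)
The strategy is to apply Theorem~\ref{main2} to the map $\varphi = f_a := x^2 + a$ over $K = \bQ$ at the point $\alpha = 0$, so that $\varphi^n(\alpha) = f_a^n(0)$, and then to feed the resulting square-free primitive prime factors into the criterion of Stoll \cite{Stoll} governing the degrees $[\Omega_{n+1,a}:\Omega_{n,a}]$. Recall that $\Omega_{n+1,a}$ is generated over $\Omega_{n,a}$ by the $2^n$ square roots $\sqrt{\theta-a}$ with $\theta$ a root of $f_a^n$, so $2^{2^n}$ is the largest possible value of $[\Omega_{n+1,a}:\Omega_{n,a}]$; and since $\prod_{\theta}(\theta-a) = f_a^{n+1}(0)$, Stoll's criterion relates maximality of this degree to $f_a^{n+1}(0)$ not being a square in $\Omega_{n,a}$.

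First I would check the hypotheses of Theorem~\ref{main2} for $(\varphi,\alpha) = (f_a, 0)$. Because $-a$ is not a perfect square we have $a \ne 0, -1$, and by assumption $a \ne -2$; for every such integer $a$ the point $0$ is not preperiodic under $f_a$ (it is preperiodic only for $a \in \{0,-1,-2\}$, and otherwise the integers $f_a^n(0)$ are eventually strictly increasing in absolute value). Hence $h_{f_a}(0) > 0$ and $0 \notin \Orb_{f_a}(0)$. Also $f_a$ is not dynamically ramified: the only finite ramification point of $f_a$ is $0$, and if some $\gamma \in f_a^{-n}(0)$ had $f_a^j(\gamma) \in \{0,\infty\}$ for some $0 \le j < n$ then $f_a^{n-j}(0) \in \{0,\infty\}$, forcing $0$ to be periodic; so $|f_a^{-n}(0)| = 2^n$ with $e_{f_a^n}(\gamma/0) = 1$ for every such $\gamma$, giving infinitely many $\gamma$ with $f_a^N(\gamma) = 0$ and $e_{f_a^N}(\gamma/0) = 1$ for some $N$. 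Thus, under the $abc$-conjecture for $\bQ$, Theorem~\ref{main2} gives: for all but finitely many $n$ there is a rational prime $p$ with $v_p(f_a^n(0)) = 1$ and $p \nmid f_a^m(0)$ for all $1 \le m < n$.

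Next I would show that such a $p$ is unramified in $\Omega_{n-1,a}/\bQ$, and hence that $f_a^n(0)$ is not a square in $\Omega_{n-1,a}$. The splitting field $\Omega_{n-1,a}$ of $f_a^{n-1}$ is unramified outside the primes dividing $\Disc(f_a^{n-1})$, whose prime support is contained in $\{2\} \cup \{\,q : q \mid f_a^j(0) \text{ for some } 1 \le j \le n-1\,\}$ (the standard discriminant formula for iterates of $x^2+a$). A given prime is a primitive factor of $f_a^n(0)$ for at most one value of $n$, so after discarding finitely many more $n$ we may also assume $p \ne 2$; then $p$ is unramified in $\Omega_{n-1,a}$. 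Since $v_p(f_a^n(0)) = 1$ is odd, $v_{\mathfrak P}(f_a^n(0)) = 1$ for every prime $\mathfrak P$ of $\Omega_{n-1,a}$ above $p$, so $f_a^n(0)$ is not a square in $\Omega_{n-1,a}$. By Stoll's criterion, $[\Omega_{n,a}:\Omega_{n-1,a}] = 2^{2^{n-1}}$ for all but finitely many $n$, which is \eqref{full2} after reindexing.

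The delicate points, and the likely main obstacles, are twofold. First, one must pin down Stoll's criterion in the precise form used above without presupposing that the Galois group of $\Omega_{n,a}/\bQ$ is already the full tree automorphism group: the example $a = -b^2 - 1$ (with $b \ge 1$, so $-a = b^2+1$ is not a square), where $[\Omega_{2,a}:\Omega_{1,a}] = 2$ but the degrees are maximal for all larger $n$, shows that the ``all but finitely many'' phrasing is genuinely unavoidable and that the criterion must be applied relatively, one level at a time. Second, the finitely many exceptional $n$ come from three sources (the exceptional set of Theorem~\ref{main2}, the at most one $n$ for which $p = 2$, and any levels at which non-squareness fails), and one must verify their union is still finite; this is routine bookkeeping but should be done carefully. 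Checking that $f_a$ is not dynamically ramified is the only other place where a hypothesis of Theorem~\ref{main2} requires a genuine, if short, argument.
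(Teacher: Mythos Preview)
Your proposal is correct and follows essentially the same route as the paper: apply Theorem~\ref{main2} to $f_a$ at $\alpha=0$, use the discriminant formula to control the ramification of $\Omega_{n,a}/\bQ$, and invoke Stoll's criterion that $[\Omega_{n+1,a}:\Omega_{n,a}]=2^{2^n}$ whenever $f_a^{n+1}(0)$ is not a square in $\Omega_{n,a}$. If anything, you are more careful than the paper in two places: you actually verify that $0$ is not preperiodic for integer $a\notin\{0,-1,-2\}$ and that $f_a$ is not dynamically ramified, and you explain why the primitive prime can be taken odd (since $2$ is primitive for at most one $n$), whereas the paper simply asserts ``there is a prime $\fp\neq 2$''.
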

\begin{proof}
By \cite[Lemma 1.6]{Stoll}, we have \eqref{full2} whenever
$f_a^{n+1}(0)$ is not a square in $\Omega{n,a}$.  
A simple calculation with discriminants (see \cite[Lemma 3.1]{Odoni} or \cite[Lemma
4.10]{Jones2}, for example) shows that that  $\Disc f^m(x) = 2^{2^m} \cdot
\Disc(f^{m-1}(x)) \cdot f^m(0)$. Hence, by induction we see that $\Omega_{n,a}$ is
unramified away from primes dividing $2 \prod_{i=1}^n f^i(0)$.   Since $f_a(x)$ is
not dynamically ramified and 0 is not preperiodic for $f_a$, we may
apply Theorem~\ref{main2} and conclude that for all but finitely many
$n$, there is an prime $\fp \not= 2$ such that $v_\fp(f^{n+1}(0)) = 1$ and
$v_\fp(f^m(0)) = 0$ for all $1 \leq m < n +1$ (note that a negative
valuation for any $f^m(0)$ s not possible since $a$ is an integer).
Thus, for all but finitely many $n$, we see that $f_a^{n+1}(0)$ is not
a square in $\Omega_{n,a}$, which finishes our proof.
\end{proof}

\begin{remark}

  Using arguments from \cite{Jones2}, one can show
  Proposition~\ref{full} holds for any $a \not = 0, -1, -2$; the
  proof, however, becomes more complicated.  One can also use similar
  arguments to show that the $abc$-conjecture implies \cite[Conjecture
  1.1]{JonesM}.  We plan to return to this problem in more generality
  in future work.
\end{remark}
\begin{remark}
One might also ask if Proposition~\ref{full} holds for function
  fields in characteristic 0.  We let $K$ be a function field of
  characteristic 0, $a \in K$ be non-constant, $f_a(x) = x^2 + a$, and
  $\Omega_{n,a}$ be the splitting field of $f_a^n(x)$ over $K$.  If $K
  = k(a)$, for $k$, the field of constants of $K$, then a
  specialization argument (from $a$ to an positive integer
  congruent to 1 or 2 mod 4) shows that
  $[\Omega_{n+1,a}: \Omega_{n,a}] = 2^{2^n}$ for all $n$.  Since
  $[K:k(a)]$ is finite for any non-constant $a \in K$, it follows that
  for any non-constant $a$ in a function field $K$ of characteristic 0,
  one has $[\Omega_{n+1,a}: \Omega_{n,a}] = 2^{2^n}$ for all but
  finitely many $n$.  
\end{remark}

Let $T$ be the binary rooted tree whose vertices at level $n$ are the
roots of $f^n(x)$.  Then Proposition~\ref{full} implies that the
natural image of $\Gal(\Qbar/\bQ)$ into $\Aut(T)$ has finite index in
$\Aut(T)$.  This can be interpreted as a dynamical analog of Serre's openness
theorem for Galois representations on torsion points of
elliptic curves \cite{Serre-Open} (see 
\cite{JB}).


\begin{thebibliography}{McQ09}

\bibitem[Bak09]{BakerF}
M.~Baker, \emph{A finiteness theorem for canonical heights attached to rational
  maps over function fields}, J. Reine Angew. Math. \textbf{626} (2009),
  205--233.

\bibitem[Ban86]{Bang}
A.~S. Bang, \emph{{T}altheoretiske {U}ndersogelse}, Tidsskrift Mat. \textbf{4}
  (1886), no.~5, 70--80, 130--137.

\bibitem[Bel79]{Belyi}
G.~V. Bely{\u\i}, \emph{Galois extensions of a maximal cyclotomic field}, Izv.
  Akad. Nauk SSSR Ser. Mat. \textbf{43} (1979), no.~2, 267--276, 479.

\bibitem[Ben05]{BenFin}
R.~L. Benedetto, \emph{Heights and preperiodic points of polynomials over
  function fields}, Int. Math. Res. Not. (2005), no.~62, 3855--3866.

\bibitem[BJ09]{JB}
N.~Boston and R.~Jones, \emph{The image of an arboreal {G}alois
  representation}, Pure Appl. Math. Q. \textbf{5} (2009), no.~1, 213--225.

\bibitem[CS93]{CS}
G.~S. Call and J.~H. Silverman, \emph{Canonical heights on varieties with
  morphism}, Compositio Math. \textbf{89} (1993), 163--205.

\bibitem[DH12]{DH}
K.~Doerksen and A.~Haensch, \emph{Primitive prime divisors in zero orbits of
  polynomials}, Integers \textbf{12} (2012), no.~A9, 7 pages.

\bibitem[Elk91]{Elkies}
N.~D. Elkies, \emph{{$ABC$} implies {M}ordell}, Internat. Math. Res. Notices
  (1991), no.~7, 99--109.

\bibitem[FG11]{FG}
X.~Faber and A.~Granville, \emph{Prime factors of dynamical sequences}, J.
  Reine Angew. Math. \textbf{661} (2011), 189--214.

\bibitem[Gas09]{Gasbarri}
C.~Gasbarri, \emph{The strong {$abc$} conjecture over function fields (after
  {M}c{Q}uillan and {Y}amanoi)}, Ast\'erisque (2009), no.~326, Exp. No. 989,
  viii, 219--256 (2010), S{\'e}minaire Bourbaki. Vol. 2007/2008.

\bibitem[Gra98]{Granville}
A.~Granville, \emph{{$ABC$} allows us to count squarefrees}, Internat. Math.
  Res. Notices (1998), no.~19, 991--1009.

\bibitem[IS09]{IS}
P.~Ingram and J.~H. Silverman, \emph{Primitive divisors in arithmetic
  dynamics}, Math. Proc. Cambridge Philos. Soc. \textbf{146} (2009), no.~2,
  289--302.

\bibitem[Jon07]{Jones2}
R.~Jones, \emph{Iterated {G}alois towers, their associated martingales, and the
  {$p$}-adic {M}andelbrot set}, Compos. Math. \textbf{143} (2007), no.~5,
  1108--1126.

\bibitem[JM12]{JonesM}
R.~Jones and M.~Manes, \emph{Galois theory of quadratic rational functions},
  available online at {\tt arxiv.org/abs/1101.4339}, 38 pages, 2012.

\bibitem[Kri12]{Krieger} H.~Krieger, \emph{Primitive prime divisors in
    the critical orbit of $z^d+c$}, to appear in
  Internat. Math. Res. Notices, 2012.
 
\bibitem[Mas84]{Mason}
R.~C. Mason, \emph{Diophantine equations over function fields}, London
  Mathematical Society Lecture Note Series, vol.~96, Cambridge University
  Press, Cambridge, 1984.

\bibitem[McQ09]{McQ2}
M.~McQuillan, \emph{Old and new techniques in function field arithmetic},
  preprint, 2009.

\bibitem[Odo85]{Odoni}
R.~W.~K. Odoni, \emph{The {G}alois theory of iterates and composites of
  polynomials}, Proc. London Math. Soc. (3) \textbf{51} (1985), no.~3,
  385--414.

\bibitem[Odo88]{Odoni2}
\bysame, \emph{Realising wreath products of cyclic groups as {G}alois groups},
  Mathematika \textbf{35} (1988), no.~1, 101--113.

\bibitem[Ric07]{Rice}
B.~Rice, \emph{Primitive prime divisors in polynomial arithmetic dynamics},
  Integers \textbf{12} (2007), no.~A26, 16 pages.

\bibitem[Rot55]{Roth}
K.~F. Roth, \emph{Rational approximations to algebraic numbers}, Mathematika
  \textbf{2} (1955), 1--20, corrigendum, ibid. {\bf 2} (1955), 168.

\bibitem[Sch74]{Schinzel}
A.~Schinzel, \emph{Primitive divisors of the expression $a\sp{n}-b\sp{n}$ in
  algebraic number fields}, J. Reine Angew. \textbf{268/269} (1974), 27--33,
  Collection of articles dedicated to Helmut Hasse on his seventy-fifth
  birthday, II.

\bibitem[Ser72]{Serre-Open}
J.-P. Serre, \emph{Propri\'et\'es galoisiennes des points d'ordre fini des
  courbes elliptiques}, Invent. Math. \textbf{15} (1972), no.~4, 259--331.
  \MR{0387283 (52 \#8126)}

\bibitem[Sil88]{SilW}
J.~H. Silverman, \emph{Wieferich's criterion and the {$abc$}-conjecture}, J.
  Number Theory \textbf{30} (1988), no.~2, 226--237.

\bibitem[Sto92]{Stoll}
M.~Stoll, \emph{Galois groups over {${\bf Q}$} of some iterated polynomials},
  Arch. Math. (Basel) \textbf{59} (1992), no.~3, 239--244.

\bibitem[Voj87]{Vojta-Diophantine-SV}
P.~Vojta, \emph{Diophantine approximations and value distribution theory},
  Lecture Notes in Math, vol. 1239, Springer-Verlag, 1987.

\bibitem[Voj98]{Vojta-ABC}
P.~Vojta, \emph{A more general abc conjecture}, Int. Math. Res. Not.
  \textbf{1998} (1998), 1103--1116.

\bibitem[Voj11]{Vojta-CIME}
Paul Vojta, \emph{Diophantine approximation and {N}evanlinna theory},
  Arithmetic geometry, Lecture Notes in Math., vol. 2009, Springer, Berlin,
  2011, pp.~111--224.

\bibitem[Yam04]{Yam}
K.~Yamanoi, \emph{The second main theorem for small functions and related
  problems}, Acta Math. \textbf{192} (2004), no.~2, 225--294.

\bibitem[Zsi92]{Z}
K.~Zsigmondy, \emph{Zur {T}heorie der {P}otenzreste}, Monatsh. Math. Phys.
  \textbf{3} (1892), no.~1, 265--284.

\end{thebibliography}

\providecommand{\bysame}{\leavevmode\hbox to3em{\hrulefill}\thinspace}
\providecommand{\MR}{\relax\ifhmode\unskip\space\fi MR }
\providecommand{\MRhref}[2]{%
  \href{http://www.ams.org/mathscinet-getitem?mr=#1}{#2}
}
\providecommand{\href}[2]{#2}

\end{document}